\pgfplotsset{compat=1.8}
\theoremstyle{definition}
\newcommand{\iid}{\stackrel{\mathrm{iid}}{\sim}}
\newcommand{\as}{\stackrel{\mathrm{\text{a.s.}}}{=}}
\newtheorem{theorem}{Theorem}[section]
\newtheorem{lemma}[theorem]{Lemma}
\newtheorem{proposition}[theorem]{Proposition}
\newtheorem{corollary}[theorem]{Corollary}
\newtheorem{remark}[theorem]{Remark}
\newtheorem{mydef}[theorem]{Definition}
\newtheorem*{mymodel*}{The General Spline Model}
\newenvironment{assumptions}
{\par\addvspace{\baselineskip} \noindent\textbf{Assumptions:}
}
{\paragraph*{}}
\renewenvironment{proof}{{\bfseries Proof.}}{\qed}
\def\keywords{\vspace{.5em}
{\textbf{Keywords}:\,\relax%
}}
\newcounter{num}
\renewcommand{\thenum}{\arabic{num}}
\newlength\figureheight
\newlength\figurewidth
\begin{document}

\title{Pointwise Convergence in Probability\\of General Smoothing Splines}
\author{Matthew Thorpe}
\affil{Department of Mathematics, Carnegie Mellon University,\\ Pittsburgh, PA 15213, United States}
\author{Adam M. Johansen}
\affil{Department of Statistics, University of Warwick, Coventry, CV4 7AL, United Kingdom }
\date{March 2017}
\maketitle

\begin{abstract}
Establishing the convergence of splines can be cast as a variational problem which is amenable to a $\Gamma$-convergence approach.
We consider the case in which the regularization coefficient scales with the number of observations, $n$, as $\lambda_n=n^{-p}$.
Using standard theorems from the $\Gamma$-convergence literature, we prove that the
general spline model is consistent in that estimators converge in a
sense slightly weaker than weak convergence in probability for $p\leq 
\frac{1}{2}$.
Without further assumptions we show this rate is sharp.
This differs from rates for strong convergence using Hilbert scales where one can often choose $p>\frac{1}{2}$.
\end{abstract}

\keywords{
Variational methods,
$\Gamma$-convergence,
pointwise convergence,
general spline model,
nonparametric smoothing.
}

\section{Introduction \label{sec:Intro}}

Given a Hilbert space, $\mathcal{H}$, with dual $\mathcal{H}^*$, the general
spline problem \cite{kimeldorf71},\cite{wahba90} is to recover $\mu^\dagger\in\mathcal{H}$ from
observations, $\{(L_i,y_i)\}_{i=1}^n\subseteq \mathcal{H}^*\times\mathbb{R}$, and the model
\begin{equation} \label{eq:Intro:Data}
y_i = L_i \mu^\dagger + \epsilon_i,
\end{equation}
where $\epsilon_i$ and $L_i$ are independent random variables taking values in $\mathbb{R}$ and $\mathcal{H}^*$, respectively.
We assume that $\mathcal{H}$ can be decomposed into
$\mathcal{H}=\mathcal{H}_0\oplus\mathcal{H}_1$ where, for $l=0,1$,
$(\mathcal{H}_l,\|\cdot\|_l)$ are themselves both Hilbert spaces.
For example, one may apply the theory to the special spline problem (also referred to as smoothing splines) where $\mathcal{H}=H^m([0,1])$ ($m\geq 1$) is the Sobolev space of degree $m$ and the observation operators are of the form $L_i\mu =\mu(t_i)$ in which $t_i$ is sampled from some distribution over $[0,1]$.
Throughout this paper we refer to~\eqref{eq:Intro:Data} as the general spline model when $L_i\in\mathcal{H}^*$ and $\mathcal{H}$ is any Hilbert space, and the special spline model when $L_i$ is the pointwise evaluation operator and $\mathcal{H}=H^m$.

Establishing convergence and the rate of convergence of estimates $\mu^n$ of $\mu^\dagger$ remains a current area of research~\cite{bissantz04,bissantz07,claeskens09,hall05,kauermann09,lai13,lukas06,wang11}.
These results establish strong convergence, in the sense of convergence with respect to a norm, and related rates of the special spline problem.
Convergence with respect to the norm in the original space is typically not
achievable so convergence results are in weaker topologies (equivalently
larger spaces). This paper fills a gap in the literature by establishing the
convergence of the general spline problem in the original space 
in the sense that  $\forall F \in \mathcal{H}^*$, $F(\mu^n)$ converges in probability to $F(\mu^\dagger)$.
There exist results for pointwise convergence of the special spline problem with equally spaced ($t_i = \frac{i}{n}$) data points~\cite{li08,shen11,xiao12,yoshida12,yoshida14}. 
Our results do not assume data points are equally spaced (we do however require that they are iid) and we consider the general case where $L_i$ are bounded and linear operators (not necessarily pointwise evaluation).

We assume that $\text{dim}(\mathcal{H}_0)=m<\infty$ and $\text{dim}(\mathcal{H}_1)=\infty$.
This can be seen as a multi-scale decomposition of $\mathcal{H}$.
The projection of a function $\mu\in\mathcal{H}$ into the subspace $\mathcal{H}_0$ is a coarse approximation of that function.
Continuing with the special spline example, one can write
\[ \mu(t) = \sum_{i=0}^{m-1} \frac{\nabla^i\mu(0)}{i!} t^i + \int_0^t \frac{(t-u)^{m-1}}{(m-1)!} \nabla^m\mu(u) \; \mathrm{d} u \]
for any $\mu\in H^m$.
The space $\mathcal{H}_0$ is then the space of polynomials of degree at most $m-1$.
Hence $\text{dim}(\mathcal{H}_0)=m$.
Imposing a penalty on the $\mathcal{H}_1$ space, we construct a sequence of estimators $\mu^n$ of $\mu^\dagger$ as the minimizers of
\[ f_n(\mu) = \frac{1}{n}\sum_{i=1}^n |y_i-L_i \mu|^2 + \lambda_n \|\chi_1 \mu\|_1^2 \]
where $\chi_i:\mathcal{H}\to\mathcal{H}_i$ ($i=0,1$) is the projection of $\mathcal{H}$ onto $\mathcal{H}_i$.
This paper addresses the asymptotic behaviour (as $n\to\infty$) of the general spline
problem and in particular how one should choose $\lambda_n$ to ensure $\mu^n$ converges
(in the weak sense that $\forall F \in \mathcal{H}^*$, $F(\mu^n)$ converges in probability to $F(\mu^\dagger)$) to $\mu^\dagger$.
An alternative, but closely related, method is the penalized spline problem, for example~\cite{eilers96}, where the estimate $\mu^\dagger$ is found by minimizing $f_n$ over functions of the form $\mu = \sum_{i=1}^\ell a_i B_i$ where $B_i$ are a set of $B$-splines and penalising the coefficients $a_i$ or derivatives of $\mu$.
Typically $\ell\ll n$ so the complexity of the problem decreases.

There are two bodies of literature on the specification of $\lambda_n$.
On the one hand there are methods which define $\lambda_n$ as the minimizer of some loss function, for example average square error.
This class of techniques includes cross-validation \cite{wahba75}, generalized cross-validation \cite{craven79} and penalized likelihood techniques \cite{hastie90,hurvich98,kou02,mallows73,sakamoto86,wahba85}.
These methods provide a numerical value of $\lambda_n$ for a given $n$ and a given set of data.
In the case of special splines there are many results on the asymptotic behavior of $\lambda_n$ and $\mu^n$ for these methods, see for example \cite{aerts02,cox83,craven79,li87,speckman01,utreras81,utreras83,wand99}.
The alternative approach, and the one we take in this paper, is to choose a
sequence such that the estimates $\mu^n$ converge to $\mu^\dagger$ in an appropriate
sense at the fastest possible rate. This strategy gives a scaling
regime for $\lambda_n$, but it does not in general give specific numerical values of
$\lambda_n$, i.e. it provides the optimal rate of convergence but not the associated
multiplicative constant. 


When considering strong convergence many results in the literature demonstrate $\mu^n\to \mu^\dagger$ in a norm via the use of Hilbert scales --- see, for example,~\cite{cox88,nychka89,ragozin83,speckman85,stone82,utreras85}.
It is not typically possible to obtain strong convergence with respect to the 
original norm and it is common to resort to the use of weaker norms; for example, in the special spline problem, one starts with the space $H^s$ but looks for convergence in $L^2$. 
The alternative, which is pursued in this paper, is to consider modes of
convergence related to weak convergence in the original space, $\mathcal{H}$.

Note that for special splines strong convergence in a larger space is a weaker
result than weak convergence in the original space: by the Sobolev embedding
theorem, weak convergence in $H^s$ implies strong convergence in $L^2$; however, the converse does not hold.

In this paper we show that the estimators of the general spline problem
converge in a sense slightly weaker than convergence weakly in probability in the large data limit, $\mu^n\rightharpoonup \mu^\dagger$, for regularization $\lambda_n$ that scales to zero no faster than $n^{-\frac{1}{2}}$.
In this scaling regime we say that the general spline problem is consistent.
For insufficient regularization the spline estimators may in some sense `blow up'.
In particular for scaling outside this regime we construct (uniformly bounded) observation operators $L_i$ such that $\mathbb{E}\left[\|\mu^n\|^2\right]\to \infty$.
Hence without further assumptions our results are sharp.

We note that these results have practical implications. If we are interested
in estimating $\mu^\dagger$ at a point $t$ then we let $F(\mu)=\mu(t)$ where
$F\in\mathcal{H}^*$. In this setting weak convergence, or the pointwise form
considered in this paper, are the natural modes of convergence to consider.
Whereas, if one is interested in a global approximation of $\mu^\dagger$, then
convergence of $\mu^n - \mu^\dagger$ in an appropriate norm is the more relevant. The two
formulations imply different scaling results for $\lambda_n$.   

There are many results in the ill-posed inverse problems literature that may be applied to the strong convergence of the general spline problem, for brevity we only mention those most relevant to this work.
In \cite{wahba85} two different methods of estimating $\lambda_n$ were compared as $n\to\infty$ using the general spline formulation.
The reproducing kernel Hilbert space setting was used in \cite{kimeldorf70} which also discussed the probabilistic interpretation behind the estimator $\mu^n$.
In \cite{cox88,nychka89} the authors prove the strong convergence and optimal rates for the spline model using an approximation $\frac{1}{n}\sum_{i=1}^n L_i^* L_i \approx U$ where $U$ is compact, positive definite, self-adjoint and with dense inverse.
See also \cite{carroll91,mair96} that consider ill-posed inverse problems without noise using similar methods.
In these papers the scaling regime for $\lambda_n$ is given in terms of the rate of decay of the eigenvalues of the inverse covariance (regularization) operator $\mathcal{C}^{-1}$ (where $\|\cdot\|_1=\|\mathcal{C}^{-1}\cdot\|_{L^2}$).

There are many more recent results addressing the asymptotic properties of
splines, including \cite{claeskens09,hall05,kauermann09,lai13,li08,shen11,wang11,xiao12,yoshida12,yoshida14}. Many
of these recent results concern the asymptotics of penalized splines where one
fixes the number of knot points as apposed to the smoothing spline case where
the number of knots is equal to the number of data points.

It is known that the special spline problem is equivalent to a white noise problem \cite{brown96}.
Strong convergence and rates for the white noise problem have been well studied see, for example, \cite{agapiou13,bissantz07,goldenshluger00} and references therein.

An interesting related result, due to Silverman~\cite{silverman84}, gives the convergence of the smoothing kernel.
That is, we can write the estimator $\mu^n$ of $\mu$ given data $\{(t_i,y_i)\}_{i=1}^n$ in the form
\[ \mu^n(s) = \frac{1}{n} \sum_{i=1}^n K_n(s,t_i) y_i \]
for a Kernel $K_n$ (see Lemma~\ref{lem:Prelim:Splines:Minfn}).
Silverman showed that $K_n(\cdot,t)$ converges to some $K$ uniformly on $[\epsilon,1-\epsilon]$ for every $\epsilon>0$ and each $t$ (the result is valid for the special spline model and penalising the second derivative).
Whilst this result gives intuition into how the kernel behaves it does not imply the convergence of the smoothing spline.
Indeed, the convergence is not valid at the end points $\{0,1\}$ and does not account for randomness in the observations $y_i$.
In other words $K_n(\cdot,t)\to K(\cdot, t)$ does not imply the convergence of $\mu^n$ (or any characterisation of the limit such as we give in this paper as a solution to a variational problem).
Silverman's result is, however, valid for a larger range of $\lambda$ than we have here.
For convergence of the kernel it is enough that $\frac{1}{\lambda} = o(n^{2-\delta})$ for any $\delta>0$.
Our results concerning the pointwise convergence of the smoothing spline hold for $\lambda$ satisfying $\frac{1}{\lambda} = O(n^{\frac12})$.

One advantage of our approach is that we gain intuition in what happens when $\lambda_n\to 0$ too quickly.
Our results show a critical rate, with respect to the scaling of $\lambda_n$, at
which the methodology is ill-posed below this rate and well-posed at or above
this rate. The second advantage of our approach is that, by using the $\Gamma$-convergence framework, as long as we can show that minimizers are uniformly bounded the
convergence follows easily (we also need to show the $\Gamma$-limit is unique, but for our problem this is not difficult).
This is easier than showing, directly, that $\mu^n-\mu^\dagger$ converges to zero.
We are consequently able to employ simpler assumptions than those required by more direct arguments.

The outline of this paper is as follows.
In the next section we introduce some preliminary material.
This starts by defining the notation we use in the remainder of the paper.
We then remind the reader of G\^ateaux derivatives, the $\Gamma$-convergence
framework and the spline methodology respectively. 
Section~\ref{sec:Cons} contains the results for the convergence of the general
spline model under appropriate conditions on the scaling in the regularization
using the $\Gamma$-convergence framework. 
We discuss the special spline model in Section~\ref{sec:Ex}.
\section{Preliminary Material \label{sec:Prelim}}
\subsection{Notation \label{sec:Prelim:Not}}
We use the following standard definitions for rates of convergence.
\begin{mydef}
\label{def:Prelim:Not:Rate}
We define the following.
\begin{enumerate}[label = (\roman*)]
\item For deterministic sequences $a_n$ and $r_n$, where $r_n$ are positive and real valued, we write $a_n=O(r_n)$ if $\frac{a_n}{r_n}$ is bounded.
If $\frac{a_n}{r_n}\to 0$ as $n\to \infty$ we write $a_n=o(r_n)$.
\item For random sequences $a_n$ and $r_n$, where $r_n$ are positive and real valued, we write $a_n=O_p(r_n)$ if $\frac{a_n}{r_n}$ is bounded in probability: for all $\epsilon>0$ there exists $M_\epsilon,N_\epsilon$ such that
\[ \mathbb{P}\left(\left\vert \frac{a_n}{r_n} \right\vert \geq M_\epsilon\right) \leq \epsilon \quad \quad \quad \forall n\geq N_\epsilon. \]
If $\frac{a_n}{r_n}\to 0$ in probability: for all $\epsilon>0$
\[ \mathbb{P}\left(\left\vert \frac{a_n}{r_n} \right\vert\geq \epsilon \right) \to 0 \quad \quad \quad \text{as } n\to \infty \]
we write $a_n=o_p(r_n)$.
\end{enumerate}
\end{mydef}

\begin{mydef}
\label{def:Prelim:Not:Lesssim}
For deterministic positive sequences $a_n$ and $b_n$ we write $a_n\lesssim
b_n$ to mean there exists $M < \infty$ such that $a_n\leq M b_n$ for all $n$.
\end{mydef}

Throughout this paper we say that a sequence of parameter estimators is
consistent if, for any value of the ``parameters'' (splines in our setting),
they converge in the sense made precise in Theorem \ref{thm:Cons:Cons}
to the true value.

We will assume $\epsilon_i$ and $L_i$ are independent sequences of iid random
variables. Our estimators $\mu^n$ are also random variables and therefore we can
reach only probabilistic conclusions about the convergence of $\mu^n$.

We will work on a probability space $(\Omega,\mathcal{F},\mathbb{P})$ rich enough
to support a countably infinite sequence of observations $(L_i,y_i)_{i\geq1}$.
All stochastic quantifiers are taken with respect to $\mathbb{P}$ unless otherwise
stated. It will be convenient to introduce the natural filtration associated
with the marginal sequence $(L_i)$ and we define for $n\in\mathbb{N}$,
$\mathcal{G}_n = \sigma(L_1,\ldots,L_n)$, a sequence of sub-$\sigma$-algebras of
$\mathcal{F}$. We use $\mathbb{E}[\cdot|\mathcal{G}_n]$ to denote a version of the
associated conditional expectation. 

To emphasize the dependence on the realization $\omega\in\Omega$, and hence of the data sequence, of our functionals we write $f_n^{(\omega)}$.

For an operator $U:\mathcal{H}\to\mathcal{H}$ we will use $\text{Ran}(U)$ to denote the range of $U$, i.e.
\[ \text{Ran}(U) = \left\{ \mu\in\mathcal{H} : \exists \nu\in\mathcal{H} \text{ s.t. } U\nu = \mu \right\}. \] 
When $U$ is linear the operator norm is defined by
\[ \|U\|_{\mathcal{L}(\mathcal{H},\mathcal{H})} := \sup_{\|\mu\|\leq 1} \|U\mu\|. \]
We denote the support of a probability measure $\phi$ on a topological space
$\mathcal{I}$ endowed with its Borel $\sigma$-algebra, by $\text{supp}(\phi)$, i.e.
\[ \text{supp}(\phi) = \inf \left\{\mathcal{I}': \mathcal{I}'\subset \mathcal{I}, \mathcal{I}' \text{ is closed, and } \int_{\mathcal{I}\setminus\mathcal{I}'} \phi(\mathrm{d} t) =0 \right\}. \]
A sequence of probability measures $P_n$ on a Polish space is said to weakly
converge to a probability measure $P$ if for all bounded and continuous functions $h$ we have
\[ P_n h \to P h. \]
Where we write $Ph = \int h(x) \; P(\mathrm{d} x)$.
If $P_n$ weakly converges to $P$ then we write $P_n\Rightarrow P$. 
\subsection{The G\^ateaux Derivative \label{sec:Prelim:GatDer}}
\begin{mydef} \label{def:Prelim:GatDer:GatDer}
We say that $f:\mathcal{H}\to \mathbb{R}$ is G\^ateaux differentiable at $\mu\in \mathcal{H}$ in direction $\nu\in \mathcal{H}$ if the limit
\[ \partial f(\mu;\nu) = \lim_{r\to 0} \frac{f(\mu+r\nu) - f(\mu)}{r} \]
exists.
We may define second order derivatives by
\[ \partial^2 f(\mu;\nu,\nu') = \lim_{r\to 0} \frac{\partial f(\mu+r\nu';\nu) - \partial f(\mu;\nu)}{r} \]
for $\mu,\nu,\nu'\in \mathcal{H}$.
Similarly for higher order derivatives.
To simplify notation, when it is clear, we write
\[ \partial^s f(\mu;\nu):= \partial^s f(\mu;\nu,\dots, \nu). \]
\end{mydef}

\begin{theorem}[Taylor's Theorem]
\label{thm:Prelim:GatDer:Taylor}
If $f:\mathcal{H}\to \mathbb{R}$ is $m$ times continuously G\^ateaux
differentiable on a convex subset $K\subset \mathcal{H}$ then, for $\mu,\nu\in K$:
\begin{align*}
f(\nu) & = f(\mu) + \partial f(\mu;\nu-\mu) + \frac{1}{2!} \partial^2 f(\mu; \nu-\mu, \nu-\mu) + \dots \\
 & + \frac{1}{(m-1)!} \partial^{m-1} f(\mu; \nu-\mu,\dots, \nu-\mu) + R_m
\end{align*}
where
\[ R_m(\mu,\nu-\mu) = \frac{1}{(m-1)!} \int_0^1 (1-t)^{m-1} \partial^m f((1-t)\mu + t\nu;\nu-\mu) \; \mathrm{d} t. \]
\end{theorem}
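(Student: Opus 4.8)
The plan is to reduce the statement to the classical one-dimensional Taylor theorem with integral remainder by restricting $f$ to the line segment joining $\mu$ and $\nu$. Since $K$ is convex, the point $\mu_t := (1-t)\mu + t\nu = \mu + t(\nu-\mu)$ lies in $K$ for every $t\in[0,1]$, so I would define $g:[0,1]\to\mathbb{R}$ by $g(t) = f(\mu_t)$. The entire argument then rests on identifying the ordinary derivatives of $g$ with the iterated G\^ateaux derivatives of $f$ evaluated along this segment.

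First I would establish, by induction on $s$, that $g$ is $s$ times differentiable with
\[ g^{(s)}(t) = \partial^s f(\mu_t; \nu-\mu) \]
(using the paper's shorthand $\partial^s f(\mu;\nu) = \partial^s f(\mu;\nu,\dots,\nu)$). For the base case, writing the difference quotient of $g$ and noting that $\mu_{t+r} = \mu_t + r(\nu-\mu)$, the limit defining $g'(t)$ is exactly $\partial f(\mu_t;\nu-\mu)$ by Definition~\ref{def:Prelim:GatDer:GatDer}. For the inductive step, the difference quotient of $g^{(s)}(t)=\partial^s f(\mu_t;\nu-\mu)$ at $t$ becomes, after the same substitution $\mu_{t+r}=\mu_t+r(\nu-\mu)$, precisely the difference quotient defining $\partial^{s+1} f(\mu_t;\nu-\mu)$.

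Next I would invoke the hypothesis that $f$ is $m$ times continuously G\^ateaux differentiable to conclude that $t\mapsto g^{(s)}(t)$ is continuous on $[0,1]$ for $0\le s\le m$: it is the composition of the continuous affine map $t\mapsto\mu_t$ with the (assumed continuous) map $\mu\mapsto\partial^s f(\mu;\nu-\mu)$. Hence $g$ belongs to $C^m([0,1])$ in the usual sense, and the classical Taylor theorem with integral remainder applies:
\[ g(1) = \sum_{s=0}^{m-1}\frac{1}{s!}\, g^{(s)}(0) + \frac{1}{(m-1)!}\int_0^1 (1-t)^{m-1}\, g^{(m)}(t)\;\mathrm{d}t. \]
Substituting $g(1)=f(\nu)$, $g^{(s)}(0)=\partial^s f(\mu;\nu-\mu)$ and $g^{(m)}(t)=\partial^m f((1-t)\mu+t\nu;\nu-\mu)$ yields the claimed expansion together with the stated form of $R_m$.

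The scalar Taylor theorem invoked in the last step is entirely standard, so the only delicate point is the inductive identification of $g^{(s)}$ with the iterated G\^ateaux derivative. Here one must be careful that the direction in which each successive derivative is taken is always the fixed vector $\nu-\mu$; this is exactly why the increment $\mu_{t+r}-\mu_t = r(\nu-\mu)$ points along $\nu-\mu$ at every order, matching the repeated-direction shorthand. The continuity hypothesis is needed precisely to guarantee that $g^{(m)}$ is continuous on $[0,1]$, so that the integral remainder is well defined.
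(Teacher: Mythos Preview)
The paper states Taylor's Theorem as a preliminary background result and does not supply a proof of its own. Your argument --- restricting $f$ to the segment $t\mapsto(1-t)\mu+t\nu$, identifying the ordinary derivatives of the resulting scalar function with the iterated G\^ateaux derivatives, and then invoking the classical one-variable Taylor formula with integral remainder --- is correct and is exactly the standard proof of this result.
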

\subsection{\texorpdfstring{$\Gamma$}{Gamma}-Convergence \label{subsec:Prelim:Gamma}}
Variational methods, and in particular $\Gamma$-convergence, have been used by the authors previously to prove consistency of estimators which arise as solutions to a variational problem~\cite{thorpe15,thorpe16}.
We have the following definition of $\Gamma$-convergence with respect to weak convergence.
\begin{mydef}[$\Gamma$-convergence {\cite[Definition 1.5]{braides02}}]
\label{def:Prelim:Gamma:Gamcon}
Let $\mathcal{H}$ be a Banach space.
A sequence $f_n :\mathcal{H}\to \mathbb{R}\cup\{\pm\infty\}$ is said to \textit{$\Gamma$-converge} on the domain $\mathcal{H}$ to $f_\infty :\mathcal{H}\to \mathbb{R}\cup\{\pm\infty\}$ with respect to weak convergence on $\mathcal{H}$, and we write $f_\infty = \Gamma\text{-}\lim_n f_n$, if for all $\nu\in \mathcal{H}$ we have
\begin{itemize}
\item[(i)] (lim inf inequality) for every sequence $(\nu^n)$ weakly converging to $\nu$
\[ f_\infty(\nu) \leq \liminf_{n\to \infty} f_n(\nu^n); \]
\item[(ii)] (recovery sequence) there exists a sequence $(\nu^n)$ weakly converging to $\nu$ such that
\[ f_\infty(\nu) \geq \limsup_{n\to \infty} f_n(\nu^n). \]
\end{itemize}
\end{mydef}

When it exists the $\Gamma$-limit is always weakly lower semi-continuous \cite[Proposition 1.31]{braides02} and therefore the minimum of the $\Gamma$-limit over weakly compact sets is achieved.
An important property of $\Gamma$-convergence is that it implies the convergence of
almost minimizers where $\mu^n$ is a sequence of almost minimizers of $f_n$ if there exists a sequence $\delta_n$ with $\delta_n\to 0$ and $f_n(\mu^n) \leq \inf f_n + \delta_n$.
In particular, we will make use of the following well known result which can be found in \cite[Theorem 1.21]{braides02}.

\begin{theorem}[Convergence of Minimizers]
\label{thm:Prelim:Gamma:Conmin}
Let $f_n: \mathcal{H}\to \mathbb{R}\cup\{\pm\infty\}$ be a sequence of functionals on a Banach space $(\mathcal{H},\|\cdot\|)$.
Assume there exists a weakly compact subset $K\subset \mathcal{H}$ with 
\[ \inf_{\mathcal{H}} f_n = \inf_K f_n \quad \forall n\in\mathbb{N}. \]
If $f_\infty = \Gamma\text{-}\lim_n f_n$ and $f_\infty$ is not identically $\pm\infty$ then
\[ \min_{\mathcal{H}} f_\infty = \lim_{n\to \infty} \inf_{\mathcal{H}} f_n. \]
Furthermore if $\mu^n\in K$ are almost minimizers of $f_n$ then any weak limit point minimizes $f_\infty$.
\end{theorem}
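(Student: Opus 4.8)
The plan is to establish the two inequalities $\limsup_n \inf_{\mathcal{H}} f_n \le \inf_{\mathcal{H}} f_\infty$ and $\inf_{\mathcal{H}} f_\infty \le \liminf_n \inf_{\mathcal{H}} f_n$ separately, each using one half of the definition of $\Gamma$-convergence, and then to squeeze them together to conclude that $\lim_n \inf_{\mathcal{H}} f_n$ exists, equals $\inf_{\mathcal{H}} f_\infty$, and is attained.

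For the upper bound I would fix an arbitrary $\nu \in \mathcal{H}$ and take a recovery sequence $\nu^n \rightharpoonup \nu$ satisfying $\limsup_n f_n(\nu^n) \le f_\infty(\nu)$. Since $\inf_{\mathcal{H}} f_n \le f_n(\nu^n)$ for every $n$, passing to the $\limsup$ gives $\limsup_n \inf_{\mathcal{H}} f_n \le f_\infty(\nu)$; as $\nu$ was arbitrary this yields $\limsup_n \inf_{\mathcal{H}} f_n \le \inf_{\mathcal{H}} f_\infty$. The hypothesis that $f_\infty$ is not identically $+\infty$ guarantees the right-hand side is not vacuously infinite.

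For the lower bound I would first produce almost minimizers inside the compact set. Because $\inf_{\mathcal{H}} f_n = \inf_K f_n$, I can choose $\mu^n \in K$ with $f_n(\mu^n) \le \inf_K f_n + \delta_n$ for some $\delta_n \to 0$. I then pass to a subsequence realizing $\liminf_n \inf_{\mathcal{H}} f_n$ and extract from it a further subsequence along which $\mu^{n_k} \rightharpoonup \mu^\star$ for some $\mu^\star \in K$. The lim inf inequality applied to this weakly convergent sequence gives $f_\infty(\mu^\star) \le \liminf_k f_{n_k}(\mu^{n_k}) \le \liminf_k (\inf_{\mathcal{H}} f_{n_k} + \delta_{n_k}) = \liminf_n \inf_{\mathcal{H}} f_n$, and hence $\inf_{\mathcal{H}} f_\infty \le f_\infty(\mu^\star) \le \liminf_n \inf_{\mathcal{H}} f_n$. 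Chaining this with the upper bound forces every inequality in $\limsup_n \inf_{\mathcal{H}} f_n \le \inf_{\mathcal{H}} f_\infty \le f_\infty(\mu^\star) \le \liminf_n \inf_{\mathcal{H}} f_n$ to be an equality; in particular $\lim_n \inf_{\mathcal{H}} f_n$ exists and equals $f_\infty(\mu^\star) = \inf_{\mathcal{H}} f_\infty$, so the infimum of $f_\infty$ is attained at $\mu^\star$ and is a genuine minimum. For the final assertion, if $\mu^n \in K$ is any sequence of almost minimizers and $\mu^\star$ a weak limit point, the identical lim inf computation gives $f_\infty(\mu^\star) \le \lim_n \inf_{\mathcal{H}} f_n = \min_{\mathcal{H}} f_\infty$, so $\mu^\star$ minimizes $f_\infty$.

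The step I expect to be the main obstacle is the subsequence extraction: $K$ is assumed weakly compact, whereas the definition of $\Gamma$-convergence is phrased in terms of weakly convergent sequences, so I must pass from weak compactness to weak sequential compactness. This is exactly the content of the Eberlein--\v{S}mulian theorem, and invoking it is the one genuinely non-elementary ingredient. The remaining bookkeeping --- selecting the subsequence that realizes the $\liminf$ \emph{before} extracting the weakly convergent one, so as to recover a true limit rather than merely matching inequalities --- is routine but must be carried out in the correct order.
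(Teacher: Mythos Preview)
The paper does not give its own proof of this theorem: it is stated as a well-known result and attributed to \cite[Theorem~1.21]{braides02}. Your argument is the standard one and is correct, including the observation that weak compactness must be upgraded to weak \emph{sequential} compactness via Eberlein--\v{S}mulian before the $\liminf$ inequality (which is phrased sequentially in Definition~\ref{def:Prelim:Gamma:Gamcon}) can be applied.
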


A simple consequence of the above is the following corollary which avoids
recourse to subsequences.

\begin{corollary}
\label{cor:Prelim:Gamma:Conmin}
If in addition to the assumptions of Theorem~\ref{thm:Prelim:Gamma:Conmin} the minimizer of the $\Gamma$-limit is unique then any sequence of almost minimizers $\mu^n$ of $f_n$ converges weakly to the minimizer of $f_\infty$. 
\end{corollary}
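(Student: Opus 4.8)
The plan is to combine the final assertion of Theorem~\ref{thm:Prelim:Gamma:Conmin}, which identifies every weak limit point of a sequence of almost minimizers as a minimizer of $f_\infty$, with the added uniqueness hypothesis to pin down a single candidate limit, and then to promote convergence along subsequences to convergence of the full sequence. Write $\bar\mu$ for the unique minimizer of $f_\infty$; its existence is guaranteed by the equality $\min_{\mathcal{H}} f_\infty = \lim_{n\to\infty} \inf_{\mathcal{H}} f_n$ already established in the theorem (recall also that the infimum of the $\Gamma$-limit over a weakly compact set is attained). Let $(\mu^n)\subseteq K$ be any sequence of almost minimizers of $f_n$. The goal is to show $\mu^n\rightharpoonup\bar\mu$.

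First I would argue that every weakly convergent subsequence of $(\mu^n)$ has limit $\bar\mu$. Indeed, if $\mu^{n_k}\rightharpoonup\nu$ then $\nu$ is a weak limit point of $(\mu^n)$, so by Theorem~\ref{thm:Prelim:Gamma:Conmin} it minimizes $f_\infty$; uniqueness of the minimizer then forces $\nu=\bar\mu$. Next, because $K$ is weakly compact and we work in a Banach (indeed Hilbert) space, the Eberlein--\v{S}mulian theorem guarantees that $K$ is weakly \emph{sequentially} compact. Consequently every subsequence of $(\mu^n)$ admits a further subsequence that converges weakly, and by the preceding remark its limit must be $\bar\mu$.

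It remains to upgrade this to convergence of the whole sequence, and this is where the main subtlety lies: the weak topology on an infinite-dimensional space is not metrizable, so one cannot directly invoke the familiar metric-space characterisation of convergence. I would instead appeal to the elementary fact, valid in \emph{any} topological space, that if every subsequence of $(\mu^n)$ possesses a further subsequence converging to a fixed point $\bar\mu$, then $(\mu^n)$ itself converges to $\bar\mu$. This is proved by contradiction: were $\mu^n\not\rightharpoonup\bar\mu$, there would exist a weakly open neighbourhood $U$ of $\bar\mu$ and a subsequence $(\mu^{n_k})$ lying entirely outside $U$, and such a subsequence could have no further subsequence converging weakly to $\bar\mu$, contradicting the conclusion of the previous paragraph. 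Combining the two steps yields $\mu^n\rightharpoonup\bar\mu$, which is the claim. The only genuine obstacle is thus the non-metrizability issue, and it is cleanly dispatched by this topological lemma together with Eberlein--\v{S}mulian sequential compactness.
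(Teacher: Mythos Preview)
Your argument is correct and is the standard way to flesh out what the paper leaves implicit: the paper states the corollary as ``a simple consequence'' of Theorem~\ref{thm:Prelim:Gamma:Conmin} without supplying a proof, and your combination of Eberlein--\v{S}mulian (to pass from weak compactness to weak sequential compactness) with the subsequence-of-subsequence lemma is exactly the routine verification one would expect. There is nothing to correct.
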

\subsection{The Spline Framework \label{subsec:Prelim:Splines}}
In this subsection we recap the spline methodology and find an explicit representation for our estimators.
In particular we construct our estimate as a minimizer of a quadratic functional.
We will show the existence and uniqueness of the minimizer.

We consider the separable Hilbert space $\mathcal{H}$ with inner product and norm given by $(\cdot,\cdot)$ and $\|\cdot\|$ respectively.
We assume we can write $\mathcal{H}=\mathcal{H}_0\oplus\mathcal{H}_1$ where $(\mathcal{H}_0,( \cdot,\cdot )_0,\|\cdot\|_0)$, $(\mathcal{H}_1,( \cdot,\cdot )_1,\|\cdot\|_1)$ are Hilbert spaces with $\text{dim}(\mathcal{H}_0)=m$ and $\text{dim}(\mathcal{H}_1)=\infty$.
We may write
\[ \|\mu\| = \|\mu\|_0 + \|\mu\|_1. \]
It is convenient to extend the domain of $||\cdot||_i$ from $\mathcal{H}_i$ to $\mathcal{H}$, setting $\|\mu\|_i := \|\chi_i \mu\| = \|\chi_i \mu\|_i$ as  $\mathcal{H}_0$ is orthogonal to $\mathcal{H}_1$ by assumption.
For example, in the special spline case, $\mathcal{H}_0$ is the space of polynomials of degree at most $m-1$ and $\mathcal{H}_1$ will be the space of remainder terms
\[ R(t) = \mu(t) - \sum_{i=0}^{m-1} \frac{\nabla^i \mu(0)}{i!} t^i. \]
The norm on $\mathcal{H}_1$ is $\|\mu\|_1 = \|\nabla^m \mu\|_{L^2}$.
Now the projection of a function $\mu\in\mathcal{H}$ to $\mathcal{H}_1$ is just the projection $\mu\mapsto R$ given by the above expression.
Clearly $\|\mu\|_1 = \|R\|_1 = \|\chi_1 \mu\|_1$.
Since $\mathcal{H}_0$ is finite dimensional we are free to choose the norm without changing the topology, however it is convenient to choose a norm that is orthogonal to $\mathcal{H}_1$ when viewed as a function of $\mathcal{H}$.
A natural choice is $\|\mu\|_0^2 = \sum_{i=0}^{m-1} |\nabla^i \mu(0)|^2$.
The special spline problem is discussed more below, particularly in Section~\ref{sec:Ex}.

We wish to estimate $\mu^\dagger\in\mathcal{H}$ given observations of the form $(L_i,y_i)$
and $L_i$ (as well as $y_i$) is random.
For convenience we summarize the general spline model in the definition below.
One can also see, for example,~\cite{wahba90} for more details on the general spline model.

\begin{mymodel*}
\label{def:Prelim:Splines:Model}
The general spline model is given by~\eqref{eq:Intro:Data} where $L_i\in\mathcal{H}^*$ are random variables and $\epsilon_i$ are iid random variables from a centered distribution, $\phi_0$, with variance $\sigma^2$.
The $L_i$ are assumed to be observed without noise and to be members of a family indexed by $\mathcal{I}\subset\mathbb{R}^d$; we write $L_t$ to mean the operator $L$ which depends upon a parameter $t\in\mathcal{I}$.
The `randomness' of $L$ is characterized by the distribution, $\phi_T$, of a
random index $t\in\mathcal{I}$.
For a sample $t_i\sim \phi_T$ we write $L_i$ as shorthand for $L_{t_i}$.
The operator $L_i$ is therefore interpreted as a realization of $L_{t_i}$.
We assume that $t_i,\epsilon_i$ are independent and for convenience we define $\phi_{L_t\mu^\dagger}$ to be the distribution $\phi_0$ shifted by $-L_t\mu^\dagger$.
By the Riesz Representation Theorem there exists $\eta_i\in\mathcal{H}$ such that $L_i\mu=(\eta_i,\mu)$ for all $\mu\in \mathcal{H}$.
The sequence of observed data points $(t_1,y_1),(t_2,y_2),\dots$  is a realization of a sequence of random elements on $(\Omega,\mathcal{F},\mathbb{P})$.
To mitigate the notational burden, we suppress the $\omega$-dependence of $t_i$, $y_i$ and $L_i$.
\end{mymodel*}


For example in the case of special splines $L_i \mu^\dagger = \mu^\dagger(t_i)$ for some $t_i$ a random variable distributed in $[0,1]$.
Observing $L_i$ without noise is equivalent here to observing $t_i$ without noise.
We refer to Section~\ref{sec:Ex} for more details.

We take our sequence of estimators $\mu^n$ of $\mu^\dagger$ as minimizers,
which are subsequently shown to be unique,  of $f_n^{(\omega)}$ where
\begin{equation} \label{eq:Prelim:Splines:fn}
f_n^{(\omega)}(\mu) = \frac{1}{n} \sum_{i=1}^n (y_i-L_i\mu)^2 + \lambda_n \|\mu\|^2_1.
\end{equation}
By completing the square we can easily show $\mu^n$ is given implicitly by
\[ G_{n,\lambda_n} \mu^n = \frac{1}{n} \sum_{i=1}^n y_i\eta_i \]
where
\begin{equation} \label{eq:Prelim:Splines:Gnlambda}
G_{n,\lambda} = \frac{1}{n}\sum_{i=1}^n \eta_iL_i + \lambda \chi_1
\end{equation}
and for clarity we also suppress the $\omega$-dependence of $G_{n,\lambda}$ from the notation.
It will be necessary in our proofs to bound $\|G_{n,\lambda_n}\|_{\mathcal{H}^*}$ in terms of $\lambda_n$ (for almost every sequence of observations).
We do this by imposing a bound on $\|L_t\|_{\mathcal{H}^*}$ or equivalently on $\|\eta_t\|$ for almost every $t\in\mathcal{I}$.
See Section~\ref{sec:Ex} for a discussion of the special spline problem and in particular how one can find $\eta_i$.
In order to bound the $\mathcal{H}_0$ norm of $\mu^n$ we need conditions on our observation operators $L_t$.
In particular we will use the observation operators to define a norm on $\mathcal{H}_0$.
Hence our proofs require a uniqueness assumption of $L_t$ in $\mathcal{H}_0$ (Assumption~\ref{ass:Prelim:Spline:Lunique} below).
It is not enough that $L_t$ are unique over $\mathcal{H}$ as this would not necessarily contain any information on the $\mathcal{H}_0$ projection of $\mu^n$, e.g. if $L_t\mu = L_t \chi_1 \mu$ for all $\mu\in\mathcal{H}$.
For clarity and future reference we now summarize the assumptions described in the previous paragraphs.

\begin{assumptions}
\label{mod:Prelim:Splines:Model}
We make the following assumptions on $f_n^{(\omega)}:\mathcal{H}\to \mathbb{R}$ defined by~\eqref{eq:Prelim:Splines:fn} and $\mathcal{H}$.
\begin{enumerate}
\item \label{ass:Prelim:Spline:Space} Let $(\mathcal{H},(\cdot,\cdot),\|\cdot\|)$ be a separable Hilbert space with $\mathcal{H}=\mathcal{H}_0\oplus\mathcal{H}_1$ where $(\mathcal{H}_0,(\cdot,\cdot)_0,\|\cdot\|_0)$ and $(\mathcal{H}_1,(\cdot,\cdot)_1,\|\cdot\|_1)$ are Hilbert spaces. Assume $\text{dim}(\mathcal{H})=\text{dim}(\mathcal{H}_1)=\infty$ and $\text{dim}(\mathcal{H}_0)=m<\infty$. 
\item \label{ass:Prelim:Spline:phimudagger} The distribution of $L_i:=L_{t_i}$
  is specified implicitly by that of $t_i \in \mathcal{I}\subset\mathbb{R}^d$ and we assume $t_i \iid \phi_T$.
\item \label{ass:Prelim:Spline:Lunique} We assume $|\text{supp}(\phi_T)| \geq m$ and
  that the $L_t$ are unique in $\mathcal{H}_0$ in the sense that if $L_t \mu=L_r \mu$ for all $\mu\in\mathcal{H}_0$ then $t=r$.
\item \label{ass:PreLim:Splines:etabound} There exists $\alpha>0$ such that $\|\eta_t\| = \|L_t\|_{\mathcal{H}^*} \leq \alpha$ for $\phi_T$-almost every $t\in\mathcal{I}$.
\end{enumerate}
\end{assumptions}

For the general spline problem we allow multivariate regression, that is $t_i\in \mathbb{R}^d$, see for example~\cite[Section 7]{xiao13} for multivariate P-splines.
However, when discussing the special spline problem we will often assume $d=1$ since, although our convergence results still hold for $d>1$, there are regularity issues such as that for $2m<d$ minimizers are not automatically continuous (for $2m>d$ the Sobolev space $H^m$ on $\mathbb{R}^d$ is embedded in $C^0$, this is not true for $2m<d$). 

The existence of a unique minimizer to \eqref{eq:Prelim:Splines:fn} is
established in the following lemma.
\begin{lemma}
\label{lem:Prelim:Splines:Minfn}
Define $f_n^{(\omega)}:\mathcal{H}\to\mathbb{R}$ by~\eqref{eq:Prelim:Splines:fn} and assume $\lambda_n>0$.
Under Assumptions \ref{ass:Prelim:Spline:Space}-\ref{ass:PreLim:Splines:etabound} the operator $G_{n,\lambda_n}:\mathcal{H}\to\mathcal{H}$ defined by~\eqref{eq:Prelim:Splines:Gnlambda} has a well defined inverse $G_{n,\lambda_n}^{-1}$ on $\text{span}\{\eta_1,\dots,\eta_n\}$ for almost every $\omega\in \Omega$.
In particular, there almost surely exists $N<\infty$ such that for all $n\geq N$ there exists a unique minimizer $\mu^n\in\mathcal{H}$ to $f_n^{(\omega)}$ which is given by
\begin{equation} \label{eq:Prelim:Splines:mun}
\mu^n = \frac{1}{n} \sum_{i=1}^n y_i G_{n,\lambda_n}^{-1} \eta_i.
\end{equation}
\end{lemma}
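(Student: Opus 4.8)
The plan is to exploit the quadratic structure of $f_n^{(\omega)}$ and to reduce the entire statement to the invertibility of $G_{n,\lambda_n}$. First I would compute the G\^ateaux derivative of \eqref{eq:Prelim:Splines:fn}: writing $L_i\mu=(\eta_i,\mu)$ and using that $\chi_1$ is the orthogonal projection onto $\mathcal{H}_1$, one obtains $\partial f_n^{(\omega)}(\mu;\nu) = 2\left( G_{n,\lambda_n}\mu - \tfrac1n\sum_{i=1}^n y_i\eta_i,\ \nu\right)$ for every $\nu\in\mathcal{H}$, so the stationarity condition is $G_{n,\lambda_n}\mu = \tfrac1n\sum_{i=1}^n y_i\eta_i$ (equivalently, one completes the square). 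Since $f_n^{(\omega)}$ is a sum of squares of affine maps plus the nonnegative term $\lambda_n\|\mu\|_1^2$ it is convex, and it is strictly convex precisely when the quadratic form $\mu\mapsto(G_{n,\lambda_n}\mu,\mu)$ is positive definite; hence a minimizer exists and is unique as soon as $G_{n,\lambda_n}$ is injective and the stationarity equation is solvable. The representation \eqref{eq:Prelim:Splines:mun} then follows at once by applying $G_{n,\lambda_n}^{-1}$ to both sides and using linearity.

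It therefore remains to show that, almost surely, $G_{n,\lambda_n}$ is boundedly invertible for all large $n$. The key structural observation is that, by \eqref{eq:Prelim:Splines:Gnlambda}, $G_{n,\lambda_n} = \lambda_n I + K_n$ with $K_n := \tfrac1n\sum_{i=1}^n \eta_i L_i - \lambda_n\chi_0$ of finite rank (at most $n+m$); indeed on $\mathcal{H}_1\cap\text{span}\{\chi_1\eta_1,\dots,\chi_1\eta_n\}^\perp$ the operator $G_{n,\lambda_n}$ acts as $\lambda_n I$. Because $\lambda_n>0$, the Fredholm alternative applies to the compact perturbation $K_n$ and reduces bounded invertibility of $G_{n,\lambda_n}$ on $\mathcal{H}$ to injectivity. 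To verify injectivity, suppose $G_{n,\lambda_n}\mu=0$; pairing with $\mu$ gives $\tfrac1n\sum_{i=1}^n (L_i\mu)^2 + \lambda_n\|\chi_1\mu\|_1^2 = 0$, whence $\chi_1\mu=0$ (that is, $\mu\in\mathcal{H}_0$) and $L_i\mu=0$ for every $i\le n$. Thus $G_{n,\lambda_n}$ is injective if and only if the restricted functionals $L_{t_1}|_{\mathcal{H}_0},\dots,L_{t_n}|_{\mathcal{H}_0}$ span the $m$-dimensional dual $\mathcal{H}_0^*$.

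The main obstacle is the remaining probabilistic claim: almost surely there exists a finite $N$ such that $\{L_{t_i}|_{\mathcal{H}_0}\}_{i\le n}$ spans $\mathcal{H}_0^*$ for all $n\ge N$. I would argue by monotone stabilisation. The subspaces $P_n:=\text{span}\{L_{t_i}|_{\mathcal{H}_0}:i\le n\}\subseteq\mathcal{H}_0^*$ are nondecreasing and live in a space of dimension $m<\infty$, so they stabilise after an almost surely finite random index, and it suffices to identify the stabilised span with $\mathcal{H}_0^*$. Here Assumption~\ref{ass:Prelim:Spline:Lunique} enters: uniqueness of $L_t$ on $\mathcal{H}_0$ together with $|\text{supp}(\phi_T)|\ge m$ ensures that the family $\{L_t|_{\mathcal{H}_0}:t\in\text{supp}(\phi_T)\}$ is not contained in any proper subspace $V\subsetneq\mathcal{H}_0^*$ (in the special spline case this is exactly the Vandermonde, or Haar-system, property of point evaluations on polynomials). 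Consequently, whenever $P_n\neq\mathcal{H}_0^*$ one has $\phi_T(\{t:L_t|_{\mathcal{H}_0}\notin P_n\})>0$, so conditionally on $\mathcal{G}_n$ each future independent draw enlarges the span with a fixed positive probability; since the dimension can increase at most $m$ times, an elementary conditional Borel--Cantelli argument gives $P_n=\mathcal{H}_0^*$ for all large $n$ almost surely.

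Combining the three steps, for all $n\ge N$ the operator $G_{n,\lambda_n}$ is boundedly invertible on $\mathcal{H}$, in particular on $\text{span}\{\eta_1,\dots,\eta_n\}$, so $f_n^{(\omega)}$ has the unique minimizer obtained from the stationarity equation and \eqref{eq:Prelim:Splines:mun} holds. I expect the spanning, or non-degeneracy, step to be the delicate part, since it is exactly where the structural Assumption~\ref{ass:Prelim:Spline:Lunique} has to be converted into genuine linear independence of the sampled functionals; by comparison the analytic ingredients (convexity, the Fredholm reduction, and the representer formula) are routine.
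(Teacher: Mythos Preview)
Your argument is correct and arrives at the same conclusion, but by a genuinely different route. The paper first performs a representer-type reduction, showing via orthogonal decomposition that any minimizer of $f_n^{(\omega)}$ must lie in the finite-dimensional subspace $\mathcal{H}_n' := \mathcal{H}_0 \oplus \text{span}\{\chi_1\eta_1,\dots,\chi_1\eta_n\}$, and then applies the Lax--Milgram lemma on $\mathcal{H}_n'$: the bilinear form $B(\mu,\nu)=\tfrac1n\sum_i(L_i\mu)(L_i\nu)+\lambda_n(\chi_1\mu,\chi_1\nu)$ is bounded by Assumption~\ref{ass:PreLim:Splines:etabound} and coercive by equivalence of norms on finite-dimensional spaces. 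You instead write $G_{n,\lambda_n}=\lambda_n I + K_n$ with $K_n$ of finite rank and invoke the Fredholm alternative on all of $\mathcal{H}$, reducing bounded invertibility to the injectivity check $(G_{n,\lambda_n}\mu,\mu)=0\Rightarrow\mu=0$. Your approach is tidier---it yields global invertibility directly and bypasses the representer step---whereas the paper's approach is more elementary in that it never leaves finite dimensions and needs no Fredholm theory. Both proofs bottom out in the identical spanning requirement, namely that the sampled $L_{t_i}|_{\mathcal{H}_0}$ eventually span $\mathcal{H}_0^*$, and both treat it lightly: the paper simply asserts that once $\{L_i\}_{i\le n}$ contains $m$ distinct elements the expression $\tfrac1n\sum(L_i\mu)^2+\lambda_n\|\mu\|_1^2$ is a norm on $\mathcal{H}_n'$, while you give a more explicit conditional Borel--Cantelli sketch and rightly flag that injectivity of $t\mapsto L_t|_{\mathcal{H}_0}$ together with $|\text{supp}(\phi_T)|\ge m$ does not by itself force linear independence without an additional Haar-type hypothesis.
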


\begin{proof}
We claim that any minimizer of $f_n^{(\omega)}$ lies in the set $\mathcal{H}_0 \oplus \text{span}\{\chi_1 \eta_1,\dots,\chi_1 \eta_n\} =:\mathcal{H}_n^\prime$.
If  this is so, and it can be shown that $G_{n,\lambda_n}^{-1}$ is well defined on $\mathcal{H}_n^\prime$, then we can conclude the minimizer must be of the form~\eqref{eq:Prelim:Splines:mun}.

We define $\Omega^\prime\subset \Omega$ by
\[ \Omega^\prime := \left\{ \omega\in \Omega : \text{the number of unique } t_j \text{ in } \{t_i\}_{i=1}^\infty \text{ is greater than } m \text{ and } \|L_i\|_{\mathcal{H}^*} \leq \alpha \, \forall i \right\}. \]
By Assumptions~\ref{ass:Prelim:Spline:Lunique} and~\ref{ass:PreLim:Splines:etabound}, $\mathbb{P}(\Omega^\prime)=1$.
Let $\omega\in \Omega^\prime$ then there exists $N$ such that for all $n\geq N$ we have that $\{L_i\}_{i=1}^N$ contains $m$ distinct elements. 
Therefore $\|\mu\|^2_{\mathcal{H}_n^\prime} := \frac{1}{n} \sum_{i=1}^n (L_i\mu)^2 + \lambda_n
\|\mu\|_1^2$ defines a norm on $\mathcal{H}_n^\prime$ for any $n\geq N$ and, as $\mathcal{H}_n^\prime$ is finite dimensional, we arrive at the same topology whichever norm we choose.

We first show that any minimizer of $f_n^{(\omega)}$ lies in $\mathcal{H}_n^\prime$.
Let $\mu = \sum_{j=1}^m a_j \phi_j + \sum_{j=1}^n b_j \chi_1 \eta_j + \rho$ where $\phi_j$ are a basis for $\mathcal{H}_0$ and $\rho \perp \mathcal{H}_n^\prime$.
Then since $L_i\rho = (\eta_i,\rho) = 0$ we have:
\[ f_n^{(\omega)}(\mu) = \frac{1}{n} \sum_{i=1}^n \left( y_i - L_i \chi_{\mathcal{H}_n^\prime}
  \mu\right)^2 + \lambda_n \left\|\sum_{j=1}^n b_j \chi_1 \eta_j \right\|_1^2 + \lambda_n \|\rho\|_1^2 \]
where $\chi_{\mathcal{H}_n^\prime}$ denotes the projection onto $\mathcal{H}_n^\prime$.
Trivially any minimizer of $f_n^{(\omega)}$ must have $\|\rho\|_1=0$ and since $\rho\in\mathcal{H}_1$ this implies $\rho=0$.
Hence minimizers of $f_n^{(\omega)}$ lie in $\mathcal{H}_n^\prime$.

We now show that $G_{n,\lambda_n}$ has a well defined inverse on $\mathcal{H}_n^\prime$; that is we want to show that for any $r\in \mathcal{H}_n^\prime$ there exists $\mu^n\in \mathcal{H}_n^\prime$ such that $G_{n,\lambda_n} \mu^n = r$. 
The weak formulation of $G_{n,\lambda_n}\mu^n=r$ is given by
\[ B(\mu^n,\nu) = (r,\nu) \quad \quad \forall \nu \in \mathcal{H}_n^\prime \]
where
\[ B(\mu,\nu) = \frac{1}{n} \sum_{i=1}^n (L_i\mu)(L_i\nu) + \lambda_n(\chi_1 \mu, \chi_1 \nu). \]
Now we apply the Lax-Milgram lemma to imply there exists a unique weak solution. 
Clearly $B:\mathcal{H}_n^\prime\times\mathcal{H}_n^\prime\to\mathbb{R}$ is a bilinear form.
We will show it is also bounded and coercive.
As $\omega\in\Omega^\prime$, $\|L_i\|_{\mathcal{H}^*}\leq \alpha$ and for $\mu,\nu\in\mathcal{H}_n^\prime$ we have 
\begin{align*}
| B(\mu,\nu)| & \leq \frac{1}{n} \sum_{i=1}^n |L_i\mu L_i\nu | + \lambda_n \|\mu\|_1 \|\nu\|_1 \\
 & \leq \alpha^2 \|\mu\| \|\nu\| + \lambda_n \|\mu\|_1 \|\nu\|_1 \\
 & \leq \left(\alpha^2 + \lambda_n\right) \|\mu\| \|\nu\|.
\end{align*}
Hence $B$ is bounded.
Similarly, for some constant $c$ independent of $\mu$,
\[ B(\mu,\mu) = \frac{1}{n} \sum_{i=1}^n (L_i\mu)^2 + \lambda_n \|\mu\|_1^2 = \|\mu\|_{\mathcal{H}_n^\prime}^2 \geq c \|\mu\|^2 \]
where the inequality follows by the equivalence of norms on finite dimensional spaces.
%
Hence $B$ is coercive and by the Lax-Milgram Lemma there exists a unique weak solution.
We have shown that for any $r\in \mathcal{H}_n^\prime$ there exists $\mu_n\in\mathcal{H}_n^\prime$ such that $B(\mu^n,\nu)=(r,\nu)$ for all $\nu\in \mathcal{H}_n^\prime$.

A strong solution follows from the equivalence of the strong and weak topology on finite dimensional spaces or alternatively from the following short calculation.
We have
\[ (r,\nu) = \left( \frac{1}{n} \sum_{i=1}^n (L_i\mu^n)\eta_i,\nu \right) + \left( \lambda_n \chi_1 \mu^n,\nu\right) \quad \quad \forall \nu\in\mathcal{H}_n^\prime. \]
Hence
\[ \left( r - \frac{1}{n} \sum_{i=1}^n (L_i\mu^n)\eta_i - \lambda_n \chi_1 \mu^n,\nu\right) = 0 \quad \quad \forall \nu\in\mathcal{H}_n^\prime. \]
So choosing $\nu = r - \frac{1}{n} \sum_{i=1}^n (L_i\mu^n)\eta_i - \lambda_n \chi_1 \mu^n$ implies $\| r - \frac{1}{n} \sum_{i=1}^n (L_i\mu^n)\eta_i - \lambda_n \chi_1 \mu^n \|^2 =0$ and therefore
\[ r = \frac{1}{n} \sum_{i=1}^n (L_i\mu^n)\eta_i - \lambda_n \chi_1 \mu^n = G_{n,\lambda_n}\mu^n. \]
As this is true for all $r\in\mathcal{H}_n^\prime$ we can infer the existence of an inverse operator $G_{n,\lambda_n}^{-1}:\mathcal{H}_n^\prime\to\mathcal{H}_n^\prime$ such that $G_{n,\lambda_n}^{-1}r=\mu^n$.
One can verify that $G_{n,\lambda_n}^{-1}$ is linear. As $\omega \in \Omega^\prime$ was
arbitrary, the result holds almost surely.
\end{proof}
\section{Consistency \label{sec:Cons}}
We demonstrate consistency by applying the $\Gamma$-convergence framework.
This requires us to find the $\Gamma$-limit, to show that the $\Gamma$-limit has a unique minimizer and that the minimizers of $f_n^{(\omega)}$ are uniformly bounded.
The next three subsections demonstrate that each of these requirements is
satisfied under the stated assumptions and allow the application of Corollary~\ref{cor:Prelim:Gamma:Conmin} to conclude the consistency of the spline model, as summarized in Theorem~\ref{thm:Cons:Cons}.
We start by stating the remainder of the conditions employed.

\begin{assumptions}
\begin{enumerate}
\setcounter{enumi}{4}
\item \label{ass:Cons:lambdascale} We have $\lambda_n=n^{-p}$ with $0<p\leq\frac{1}{2}$.
\item \label{ass:Cons:Norm} For $\nu\in\mathcal{H}$ the following relation holds:
\[ \int_{\mathcal{I}} (L_t\nu)^2 \phi_T(\mathrm{d}t) = 0 \Leftrightarrow \nu=0. \]
\item \label{ass:Cons:Cts} For each $\mu\in\mathcal{H}$ each $L_t\mu$ is continuous in $t$, i.e $\|L_s-L_t\|_{\mathcal{H}^*} \to 0$ as $s\to t$.
\end{enumerate}
\end{assumptions}

Assumption~\ref{ass:Cons:lambdascale} gives the admissible scaling regime in $\lambda_n$.
Clearly if $p\leq 0$ then $\lambda_n\not\to 0$ hence we expect the limit, if it even
exists, to be biased towards solutions more regular than $\mu^\dagger$.
We are required to show that the minimizers are bounded in probability.
To do so we show they are bounded in expectation.
We will show in Theorem~\ref{thm:Cons:Sharp} that for $p>\frac{1}{2}$ we cannot bound minimizers in expectation; hence it is not possible to extend our proofs for $p\not\in (0,\frac{1}{2}]$.
Theorem~\ref{thm:Cons:Cons} holds as it does and not in expectation because the $\Gamma$-convergence framework requires $\mu^n$ to be a minimizer and as such we cannot make conclusions about the ``average minimizer'' since $\mathbb{E}[\mu^n|\mathcal{G}_n]$ is not a minimizer.

We will show that the second derivative of $f_\infty$ in the direction $\nu$ is given by $\int_{\mathcal{I}} (L_t\nu)^2 \; \phi_T(\mathrm{d}t)$.
Assumption~\ref{ass:Cons:Norm} is used to establish that $f_\infty$ is strictly convex, and hence the minimizer is unique.

It will be necessary to show that
\begin{equation} \label{eq:Cons:IntLtmu}
\frac{1}{n} \sum_{i=1}^n |L_i\mu| \to \int_{\mathcal{I}} |L_t\mu| \; \phi_T(\mathrm{d}t)
\end{equation}
for all $\mu\in\mathcal{H}$ with probability one.
We impose Assumption~\ref{ass:Cons:Cts} (together with Assumption~\ref{ass:PreLim:Splines:etabound}) to imply that $L_t\mu$ is continuous and bounded in $t$ for all $\mu\in\mathcal{H}$ and therefore by the weak convergence of the empirical measure we infer that~\eqref{eq:Cons:IntLtmu} holds for all $\mu\in\mathcal{H}$ and for almost every sequence $\{L_i\}_{i=1}^\infty$.
In particular we can define a set $\Omega^\prime\subset\Omega$ independent of $\mu$, on
which~\eqref{eq:Cons:IntLtmu} holds, such that $\mathbb{P}(\Omega^\prime)=1$.

\begin{theorem}
\label{thm:Cons:Cons}
Define $f_n^{(\omega)}:\mathcal{H}\to\mathbb{R}$ by~\eqref{eq:Prelim:Splines:fn}.
Under Assumptions~\ref{ass:Prelim:Spline:Space}-\ref{ass:Cons:Cts} the minimizer $\mu^{n}$ of $f_n^{(\omega)}$ converges in the following sense: 
for all $\epsilon,\delta>0$ and $F\in \mathcal{H}^*$ there exists $N = N(\epsilon,\delta,F)\in\mathbb{N}$ such that
\[ \mathbb{P}\left( \left|F(\mu^{n})-F(\mu^\dagger)\right| \geq \epsilon \right) \leq \delta \quad \text{for } n\geq N. \]
\end{theorem}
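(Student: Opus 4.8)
The plan is to verify the three hypotheses underlying Corollary~\ref{cor:Prelim:Gamma:Conmin} --- identify the $\Gamma$-limit, show it has a unique minimizer equal to $\mu^\dagger$, and bound the minimizers --- and then extract the probabilistic conclusion by a truncation argument. First I would substitute $y_i=L_i\mu^\dagger+\epsilon_i$ and decompose $f_n^{(\omega)}(\mu)=a_n(\mu)+b_n(\mu)+c_n$, where $a_n(\mu)=\frac1n\sum_i(L_i(\mu^\dagger-\mu))^2+\lambda_n\|\mu\|_1^2$ is driven by the design alone, $b_n(\mu)=2(v_n,\mu^\dagger-\mu)$ with $v_n=\frac1n\sum_i\epsilon_i\eta_i$ is the noise--design cross term, and $c_n=\frac1n\sum_i\epsilon_i^2$ is constant in $\mu$. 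Writing $a_n(\mu)=(\mu^\dagger-\mu,C_n(\mu^\dagger-\mu))+\lambda_n\|\mu\|_1^2$ with $C_n=\frac1n\sum_i\eta_i\otimes\eta_i$, the candidate limit is $f_\infty(\mu)=(\mu^\dagger-\mu,C(\mu^\dagger-\mu))+\sigma^2=\int_{\mathcal{I}}(L_t(\mu^\dagger-\mu))^2\,\phi_T(\mathrm dt)+\sigma^2$, where $C=\mathbb{E}[\eta_t\otimes\eta_t]$.

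The key observation making this work almost surely (rather than only in probability) is that $\eta_i=\eta_{t_i}$ are i.i.d. and bounded by Assumption~\ref{ass:PreLim:Splines:etabound}, so the strong law of large numbers for Hilbert-space--valued random variables gives, on a single full-measure set $\Omega'$: $C_n\to C$ in operator norm, $v_n\to 0$ in $\mathcal{H}$, and $c_n\to\sigma^2$. Since $\mathbb{E}\|\eta_t\|^2\leq\alpha^2<\infty$ the operator $C$ is trace class, hence compact, so $\mu\mapsto(\mu^\dagger-\mu,C(\mu^\dagger-\mu))$ is weakly continuous; combined with $\|C_n-C\|\to 0$ this yields, for any $\mu^n\rightharpoonup\mu$, that $a_n(\mu^n)\to(\mu^\dagger-\mu,C(\mu^\dagger-\mu))$, while $b_n(\mu^n)=2(v_n,\mu^\dagger-\mu^n)\to 0$ because $v_n\to 0$ strongly and $\mu^n$ is bounded. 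The $\liminf$ inequality then follows (discarding the nonnegative penalty) and the constant sequence is a recovery sequence (as $\lambda_n\to 0$), so $f_n^{(\omega)}\xrightarrow{\Gamma}f_\infty$ on $\Omega'$. Strict convexity of $f_\infty$ follows from $\partial^2f_\infty(\mu;\nu)=2\int(L_t\nu)^2\,\phi_T(\mathrm dt)$, which Assumption~\ref{ass:Cons:Norm} forces to be strictly positive for $\nu\neq0$; setting the first variation to zero and testing against $\nu=\mu^\dagger-\mu$ identifies the unique minimizer as $\mu^\dagger$.

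It remains to control the minimizers, and I expect this to be the main obstacle. Using the explicit representation~\eqref{eq:Prelim:Splines:mun} and the operator $G_{n,\lambda_n}$ of~\eqref{eq:Prelim:Splines:Gnlambda}, I would bound $\mathbb{E}\|\mu^n\|^2$ by treating the $\mathcal{H}_0$ and $\mathcal{H}_1$ components separately: the $\mathcal{H}_1$-part is regularized and controlled through $\lambda_n$ via $\|G_{n,\lambda_n}^{-1}\|\lesssim\lambda_n^{-1}$, while the unregularized finite-dimensional $\mathcal{H}_0$-part must be controlled using the nondegeneracy of the design on $\mathcal{H}_0$ (so that $G_{n,\lambda_n}^{-1}$ stays bounded there a.s. for large $n$). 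This is precisely where the scaling $\lambda_n=n^{-p}$ with $p\leq\frac12$ (Assumption~\ref{ass:Cons:lambdascale}) enters: the noise contribution to $\|\mu^n\|^2$ has expectation of order $n^{-1}\lambda_n^{-2}=n^{2p-1}$, which stays bounded exactly when $p\leq\frac12$; that $p\leq\frac12$ is unavoidable is the content of the sharpness statement in Theorem~\ref{thm:Cons:Sharp}. The output is a uniform bound $\mathbb{E}\|\mu^n\|^2\leq M$, hence $\sup_n\mathbb{P}(\|\mu^n\|>R)\leq M/R^2$.

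Finally I would bridge the almost-sure $\Gamma$-convergence and the merely-in-probability boundedness by truncation. Fix $\epsilon,\delta,F$ and choose $R>\max(\|\mu^\dagger\|,\sqrt{2M/\delta})$, so that $\mathbb{P}(\|\mu^n\|>R)\leq\delta/2$. Let $\tilde\mu^n$ minimize $f_n^{(\omega)}$ over the weakly compact ball $B_R$; by strict convexity $\tilde\mu^n=\mu^n$ whenever $\|\mu^n\|\leq R$. The same estimates give $\Gamma$-convergence on $B_R$, and since $f_\infty$ restricted to $B_R$ still has the unique minimizer $\mu^\dagger$, Corollary~\ref{cor:Prelim:Gamma:Conmin} yields $\tilde\mu^n\rightharpoonup\mu^\dagger$ on $\Omega'$, so $F(\tilde\mu^n)\to F(\mu^\dagger)$ almost surely and hence in probability. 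Combining, $\mathbb{P}(|F(\mu^n)-F(\mu^\dagger)|\geq\epsilon)\leq\mathbb{P}(|F(\tilde\mu^n)-F(\mu^\dagger)|\geq\epsilon)+\mathbb{P}(\|\mu^n\|>R)\leq\delta$ for all large $n$, which is the claim.
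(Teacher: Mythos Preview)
Your overall architecture matches the paper's: establish $\Gamma$-convergence of $f_n^{(\omega)}$ to $f_\infty$ almost surely, show $f_\infty$ has the unique minimizer $\mu^\dagger$, bound the minimizers, and invoke Corollary~\ref{cor:Prelim:Gamma:Conmin}. Your treatment of uniqueness (Lemma~\ref{lem:Cons:Unique:2ndDer} and Corollary~\ref{cor:Cons:Unique:Unique} in the paper) and of the bound on minimizers (Lemmas~\ref{lem:Cons:Bound:H0Bound}--\ref{lem:Cons:Bound:NoiseDifBound} and Corollary~\ref{cor:Cons:Bound:Boundinprob}) is essentially the same, if sketchier; in particular the paper also splits the $\mathcal{H}_0$ and $\mathcal{H}_1$ contributions and obtains the noise term of order $n^{-1}\lambda_n^{-2}$, which is where $p\le\tfrac12$ enters.

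Where you genuinely diverge is the $\Gamma$-convergence step. The paper (Theorem~\ref{thm:Cons:Gamma:GammaConv}) works through the empirical measure $P_n^{(\omega)}\Rightarrow P$ and a Fatou-type lower semicontinuity result of Feinberg, using the continuity Assumption~\ref{ass:Cons:Cts} to control $\|L_{t_m}-L_t\|_{\mathcal{H}^*}$ in the $\liminf$ inequality. You instead invoke the strong law for Hilbert-space--valued random variables to get $C_n\to C$ in operator norm and $v_n\to 0$ in $\mathcal{H}$, then use that $C$ is trace class (hence compact) to obtain weak continuity of $\mu\mapsto(\mu^\dagger-\mu,C(\mu^\dagger-\mu))$. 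This is a legitimate and arguably cleaner route: it avoids the external Fatou lemma and, as written, does not actually use Assumption~\ref{ass:Cons:Cts} for the $\Gamma$-limit (the paper still needs it in Lemma~\ref{lem:Cons:Bound:H0Bound}). Conversely, the paper's empirical-measure argument is more robust to nonquadratic loss functions, where your operator decomposition would not be available.

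Finally, your explicit truncation step---passing from $\|\mu^n\|=O_p(1)$ plus almost-sure $\Gamma$-convergence to the pointwise-in-$F$ conclusion by minimizing over $B_R$ and comparing $\tilde\mu^n$ with $\mu^n$---is exactly the bridge the paper needs but leaves implicit when it says the three ingredients ``allow the application of Corollary~\ref{cor:Prelim:Gamma:Conmin}.'' Spelling this out is a genuine improvement in rigor.
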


\begin{remark}
\label{rem:Cons:ConvNot}
We view the mode of convergence in the above theorem as a natural generalization of
convergence in probability; it is weaker than convergence \emph{weakly in
  probability}, which would require that the convergence of $\mu^{n}\to \mu^\dagger$ were
uniform over $F\in\mathcal{H}^*$ and not pointwise as established in the theorem. 
\end{remark}

The following theorem shows that if $p>\frac{1}{2}$ then without imposing further assumptions it is always possible to construct observation functionals $\{L_t\}_{t\in\mathcal{I}}$ such that $\mathbb{E}\left[ \|\mu^n\|^2\right]\to\infty$. 

\begin{theorem}
\label{thm:Cons:Sharp}
Define $f_n^{(\omega)}:\mathcal{H}\to\mathbb{R}$ by~\eqref{eq:Prelim:Splines:fn}, let $\mu^n$ be the minimizer of $f_n^{(\omega)}$ and take any $\alpha>0$ and $p>\frac{1}{2}$.
Take Assumptions \ref{ass:Prelim:Spline:Space}-\ref{ass:Prelim:Spline:phimudagger} and assume that $\lambda=n^{-p}$.
Then there exists a distribution $\phi_T$ on $\mathcal{I}$ such that
$\|L_t\|_{\mathcal{H}^*} = \|\eta_t\| \leq \alpha$ for almost every $\omega\in\Omega$
(i.e. Assumption~\ref{ass:PreLim:Splines:etabound} holds) and $\mathbb{E}[\|\mu^n\|^2]\to \infty$.
\end{theorem}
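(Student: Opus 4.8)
The plan is to exhibit an explicit adversarial construction. Since only Assumptions~\ref{ass:Prelim:Spline:Space}--\ref{ass:Prelim:Spline:phimudagger} are imposed, I am free to choose the index set $\mathcal{I}$, the distribution $\phi_T$ and the family $\{\eta_t\}$. Fix an orthonormal basis $\{e_l\}_{l\geq 1}$ of $\mathcal{H}_1$ and an orthonormal basis $\{\psi_j\}_{j=1}^m$ of $\mathcal{H}_0$, take $\mathcal{I}=\mathbb{N}$, and set $\eta_j=\alpha\psi_j$ for $j=1,\dots,m$ (``anchors'') and $\eta_{m+l}=\alpha e_l$ for $l\geq1$. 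Then $\|\eta_t\|=\alpha$ for every $t$, so Assumption~\ref{ass:PreLim:Splines:etabound} holds with equality. I let $\phi_T$ place fixed mass $\tfrac{1}{2m}$ on each anchor and distribute the remaining mass $\tfrac12$ over the directions $l\geq1$ according to a power law $p_l\propto l^{-\beta}$, with $\beta>1$ to be fixed at the end. Finally take $\mu^\dagger=0$, so that $y_i=\epsilon_i$. Because the anchors lie in $\mathcal{H}_0$ and the remaining $\eta$'s lie in $\mathcal{H}_1$, the functional $f_n^{(\omega)}$ splits additively into a part depending only on $\chi_0\mu$ and a part depending only on $\chi_1\mu$; the anchors are almost surely all observed for $n$ large, which makes the $\mathcal{H}_0$-block of $G_{n,\lambda_n}$ positive definite and hence the minimizer $\mu^n$ unique, exactly as in Lemma~\ref{lem:Prelim:Splines:Minfn}.

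Next I would diagonalise. Writing $\nu=\chi_1\mu$ in coordinates $\nu_l=(e_l,\nu)$ and letting $M_l$ denote the number of observations falling on direction $e_l$, the $\mathcal{H}_1$-part of $f_n^{(\omega)}$ separates across $l$, so minimising coordinatewise gives
\[ (\chi_1\mu^n)_l=\frac{\tfrac{\alpha}{n}\sum_{i:\,l_i=l}\epsilon_i}{\tfrac{\alpha^2 M_l}{n}+\lambda_n}. \]
Since $\|\mu^n\|^2\geq\|\chi_1\mu^n\|^2=\sum_l (\chi_1\mu^n)_l^2$ and, conditionally on $\mathcal{G}_n$, the $\epsilon_i$ are independent with mean zero and variance $\sigma^2$, taking $\mathbb{E}[\cdot\mid\mathcal{G}_n]$ yields
\[ \mathbb{E}\!\left[\|\mu^n\|^2\mid\mathcal{G}_n\right]\geq\frac{\alpha^2\sigma^2}{n^2}\sum_{l\geq1} g(M_l),\qquad g(m):=\frac{m}{\left(\tfrac{\alpha^2 m}{n}+\lambda_n\right)^2}, \]
and hence, taking expectations, $\mathbb{E}[\|\mu^n\|^2]\geq\frac{\alpha^2\sigma^2}{n^2}\sum_l\mathbb{E}[g(M_l)]$ with $M_l\sim\mathrm{Bin}(n,p_l)$.

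To bound the sum below I discard all but the singleton contribution: using $g\geq0$, $\mathbb{E}[g(M_l)]\geq g(1)\,\mathbb{P}(M_l=1)=g(1)\,n p_l(1-p_l)^{n-1}$. Restricting to the ``light'' directions with $p_l\leq 1/n$, where $(1-p_l)^{n-1}$ is bounded below by a constant for large $n$, gives $\sum_l\mathbb{E}[g(M_l)]\gtrsim g(1)\,n\,\Pi_n$, where $\Pi_n:=\sum_{l:\,p_l\leq 1/n}p_l$ is the mass on light directions. For the power law $p_l\propto l^{-\beta}$ a direct tail estimate gives $\Pi_n\asymp n^{-(\beta-1)/\beta}$, while $g(1)=(\alpha^2/n+\lambda_n)^{-2}\asymp\lambda_n^{-2}=n^{2p}$ when $\tfrac12<p<1$ and $g(1)\asymp n^2$ when $p\geq1$. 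Substituting $\mathbb{E}[\|\mu^n\|^2]\gtrsim \tfrac{\alpha^2\sigma^2}{n}\,g(1)\,\Pi_n$, one finds the lower bound is $\gtrsim n^{2p-1-(\beta-1)/\beta}$ in the first case and $\gtrsim n^{1-(\beta-1)/\beta}$ in the second. Choosing $\beta>1$ close enough to $1$ that $(\beta-1)/\beta<2p-1$ (recall $(\beta-1)/\beta<1$ always) makes both exponents strictly positive, so $\mathbb{E}[\|\mu^n\|^2]\to\infty$.

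I expect the crux to be the occupancy argument: because $\phi_T$ is fixed while $n\to\infty$, every individual atom is eventually hit many times, so one must arrange that the \emph{total} mass on atoms that are still typically hit at most once decays slowly — this is exactly what the heavy tail of the power law $p_l\propto l^{-\beta}$ with $\beta$ near $1$ buys, and quantifying $\Pi_n\asymp n^{-(\beta-1)/\beta}$ is the technical heart of the proof. A secondary point requiring care is the well-posedness of ``the minimizer'': with only Assumptions~\ref{ass:Prelim:Spline:Space}--\ref{ass:Prelim:Spline:phimudagger} the estimator need not be unique, which is why the anchor atoms are included — they reinstate coercivity on $\mathcal{H}_0$ precisely as in the proof of Lemma~\ref{lem:Prelim:Splines:Minfn}, while playing no role in the $\mathcal{H}_1$ blow-up that drives the divergence.
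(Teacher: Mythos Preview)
Your construction is correct and self-contained. The route is genuinely different from the paper's.

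The paper's argument is much terser: it fixes any family $\{\eta_t\}$ with $\|\eta_t\|=\alpha$, passes from the upper bound $\|G_{n,\lambda_n}\mu\|\leq(\alpha^2+\lambda_n)\|\mu\|$ to the lower bound $\|G_{n,\lambda_n}^{-1}\mu\|\geq(\alpha^2+\lambda_n)^{-1}\|\mu\|$, applies this directly to $\nu^n=\frac{1}{n}\sum_i\epsilon_i\eta_i$, and then uses the orthogonal decomposition $\mathbb{E}\|\mu^n\|^2=\mathbb{E}\|G_{n,\lambda_n}^{-1}U_n\mu^\dagger\|^2+\mathbb{E}\|G_{n,\lambda_n}^{-1}\nu^n\|^2$ to conclude (so no assumption on $\mu^\dagger$ is needed --- your reduction to $\mu^\dagger=0$ is the same move in disguise, and you could drop it). However, the paper's displayed line asserts $\mathbb{E}[\|G_{n,\lambda_n}^{-1}\nu^n\|^2\mid\mathcal{G}_n]\geq\sigma^2\alpha^2\big/\big(\lambda_n^2\, n(\alpha^2+\lambda_n)^2\big)$, whereas the operator-norm lower bound together with $\mathbb{E}[\|\nu^n\|^2\mid\mathcal{G}_n]=\sigma^2\alpha^2/n$ only yields $\sigma^2\alpha^2\big/\big(n(\alpha^2+\lambda_n)^2\big)\to 0$; the extra $\lambda_n^{-2}$ is precisely what your occupancy construction supplies. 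By arranging, via the power-law tail, that at time $n$ a mass $\Pi_n$ of directions is typically hit exactly once, you force the corresponding eigenvalues of $G_{n,\lambda_n}$ down to $\alpha^2/n+\lambda_n\asymp\lambda_n$, and that is what produces $g(1)\asymp\lambda_n^{-2}$. So your longer argument is not merely an alternative presentation: it makes the blow-up mechanism explicit and the proof complete, at the cost of a specific (discrete, non-continuous) choice of $\phi_T$ and family $\{L_t\}$ rather than the paper's ostensibly generic one.
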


In the special spline model, when $\lambda\to 0$ too quickly the functions $\mu^n$ begin to interpolate the data points $\{(t_i,y_i)\}_{i=1}^n$, hence the derivative of $\mu^n$ will not stay bounded.
Furthermore, when considering weak convergence, one is restricting to finite dimensional projections.
It is therefore not surprising that $n^{-\frac{1}{2}}$ is the best we can do.
For $p>\frac{1}{2}$ and a sequence of real valued iid random variables $X_i$ of
finite variance (which are not identically zero) we have $n^{2p}\mathbb{E}(\frac{1}{n}\sum_{i=1}^n X_i)^2 \to \infty$.
In light of this elementary observation Theorem~\ref{thm:Cons:Sharp} is not
surprising. The proof is given in Section~\ref{subsec:Cons:Sharp}.
\subsection{The \texorpdfstring{$\Gamma$}{Gamma}-Limit \label{subsec:Cons:Gamma}}
We claim the $\Gamma$-limit of $f_n^{(\omega)}$, for almost every $\omega\in\Omega$, is given by
\begin{equation} \label{eq:Cons:Gamma:GamLim}
f_\infty(\mu) = \int_{\mathcal{I}} \int_{-\infty}^\infty |y-L_t \mu|^2 \; \phi_{L_t\mu^\dagger}(\mathrm{d} y) \; \phi_T(\text{d} t).
\end{equation}

\begin{theorem} \label{thm:Cons:Gamma:GammaConv}
Define $f_n^{(\omega)},f_\infty:\mathcal{H}\to \mathbb{R}$ by~\eqref{eq:Prelim:Splines:fn} and~\eqref{eq:Cons:Gamma:GamLim} respectively.
Under Assumptions~\ref{ass:Prelim:Spline:Space}-\ref{ass:Prelim:Spline:phimudagger}, \ref{ass:Cons:lambdascale} and~\ref{ass:Cons:Cts},
\[ f_\infty = \Gamma\text{-}\lim_n f_n^{(\omega)} \]
for almost every $\omega\in\Omega$.
\end{theorem}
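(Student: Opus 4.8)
The plan is to verify the two conditions of Definition~\ref{def:Prelim:Gamma:Gamcon} ($\Gamma$-convergence with respect to weak convergence) for the candidate limit $f_\infty$ in~\eqref{eq:Cons:Gamma:GamLim}. The first move is to simplify $f_\infty$ into a more transparent form. Expanding the inner Gaussian-type integral against $\phi_{L_t\mu^\dagger}$ (which is $\phi_0$ shifted by $-L_t\mu^\dagger$, with mean $L_t\mu^\dagger$ and variance $\sigma^2$) gives, for each fixed $t$,
\[
\int_{-\infty}^\infty |y - L_t\mu|^2\,\phi_{L_t\mu^\dagger}(\mathrm{d}y) = \sigma^2 + \bigl(L_t\mu^\dagger - L_t\mu\bigr)^2 = \sigma^2 + \bigl(L_t(\mu^\dagger-\mu)\bigr)^2,
\]
so that $f_\infty(\mu) = \sigma^2 + \int_{\mathcal{I}} \bigl(L_t(\mu-\mu^\dagger)\bigr)^2\,\phi_T(\mathrm{d}t)$. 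This makes clear that $f_\infty$ is (up to the additive constant $\sigma^2$) a nonnegative quadratic functional minimized at $\mu^\dagger$, and it is the object we must match against the limiting behaviour of $f_n^{(\omega)}(\nu^n)$.

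Next I would fix a realization $\omega$ in the full-measure set $\Omega'$ on which the empirical-average convergence~\eqref{eq:Cons:IntLtmu} holds, and on which $\frac{1}{n}\sum_{i=1}^n \epsilon_i^2 \to \sigma^2$ and $\frac{1}{n}\sum_{i=1}^n \epsilon_i L_i\nu \to 0$ for all $\nu$ (these follow from the SLLN applied to the iid sequences; continuity and the $\alpha$-bound of Assumption~\ref{ass:PreLim:Splines:etabound} let one pass to a $\mu$-independent null set exactly as in the discussion preceding the theorem). Writing $y_i = L_i\mu^\dagger + \epsilon_i$ and expanding the data-fidelity term of $f_n^{(\omega)}(\nu^n)$ gives
\[
\frac{1}{n}\sum_{i=1}^n (y_i - L_i\nu^n)^2 = \frac{1}{n}\sum_{i=1}^n \epsilon_i^2 + \frac{2}{n}\sum_{i=1}^n \epsilon_i L_i(\mu^\dagger - \nu^n) + \frac{1}{n}\sum_{i=1}^n \bigl(L_i(\nu^n-\mu^\dagger)\bigr)^2.
\]
For the recovery sequence I would take the constant sequence $\nu^n = \nu$: then the three terms converge to $\sigma^2$, $0$, and $\int_{\mathcal{I}}(L_t(\nu-\mu^\dagger))^2\,\phi_T(\mathrm{d}t)$ respectively (the cross term by the SLLN above, the quadratic term by~\eqref{eq:Cons:IntLtmu} applied to $\mu = \nu - \mu^\dagger$), while the penalty $\lambda_n\|\nu\|_1^2 \to 0$ since $\lambda_n = n^{-p} \to 0$. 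Hence $f_n^{(\omega)}(\nu) \to f_\infty(\nu)$, which supplies the recovery-sequence inequality~(ii) (with equality in the limit).

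The genuine work is the liminf inequality~(i), and this is the step I expect to be the main obstacle, because the test sequence $\nu^n$ now only converges \emph{weakly} to $\nu$, so the cross term and the quadratic term must be controlled without strong convergence. The penalty term $\lambda_n\|\nu^n\|_1^2 \geq 0$ is harmless (it only helps). The quadratic term is weakly lower semicontinuous: $\nu \mapsto \int_{\mathcal{I}}(L_t\nu)^2\,\phi_T(\mathrm{d}t)$ is a nonnegative quadratic form, hence convex and strongly continuous, hence weakly lower semicontinuous, so $\liminf$ of its empirical version along $\nu^n\rightharpoonup\nu$ should dominate its value at $\nu$ — though care is needed because the functional being evaluated is the \emph{empirical} quadratic form $\frac{1}{n}\sum (L_i(\nu^n-\mu^\dagger))^2$ rather than the limiting integral, so one must interleave the weak lower semicontinuity with the convergence~\eqref{eq:Cons:IntLtmu}. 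The cleanest route is to bound $f_n^{(\omega)}(\nu^n)$ below by dropping the penalty, then use that for each fixed $t$ the map $\mu\mapsto (L_t\mu)^2$ is weakly continuous (since $L_t$ is a fixed bounded linear functional, $L_t\nu^n \to L_t\nu$ whenever $\nu^n\rightharpoonup\nu$), combined with a Fatou-type argument against $\phi_T$ and uniform integrability supplied by the $\alpha$-bound $|L_t\nu^n| \le \alpha\sup_n\|\nu^n\| < \infty$; the cross term is handled by Cauchy--Schwarz together with the vanishing of $\frac{1}{n}\sum\epsilon_i L_i(\cdot)$. Assembling these bounds yields $\liminf_n f_n^{(\omega)}(\nu^n) \geq f_\infty(\nu)$, completing~(i) and hence the theorem. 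The subtlety to flag is ensuring that all the almost-sure statements hold on a single $\omega$-set $\Omega'$ that is independent of $\nu$ and of the particular weakly convergent sequence, which the continuity Assumption~\ref{ass:Cons:Cts} and separability of $\mathcal{H}$ are designed to guarantee.
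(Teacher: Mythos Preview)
Your recovery-sequence argument matches the paper's: both take the constant sequence $\nu^n=\nu$, expand the square, and pass to the limit term by term via the strong law of large numbers and the almost-sure weak convergence of the empirical measure.

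For the liminf inequality the routes diverge. The paper does \emph{not} expand $y_i=L_i\mu^\dagger+\epsilon_i$; it works with the undecomposed integrand $g_\mu(t,y)=(y-L_t\mu)^2$ and invokes a Fatou lemma for weakly converging \emph{measures} (Theorem~1.1 of Feinberg--Kasyanov--Zadoianchuk, cited as \cite{feinberg14}), which yields
\[
\int_{\mathcal{I}}\int_{-\infty}^\infty \liminf_{n\to\infty,\ (t',y')\to(t,y)} g_{\nu^n}(t',y')\,P(\mathrm{d}(t,y)) \ \le\ \liminf_{n\to\infty}\, P_n^{(\omega)} g_{\nu^n}
\]
in one stroke. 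It then verifies the inner inequality $\liminf g_{\nu^n}(t_m,y_m)\ge g_\nu(t,y)$ via the triangle inequality, Assumption~\ref{ass:Cons:Cts}, and the norm-boundedness of weakly convergent sequences. The point is that the Feinberg lemma absorbs precisely the ``interleaving'' you flag: simultaneous variation of the integrand (through $\nu^n$) and of the integrating measure (through $P_n^{(\omega)}\Rightarrow P$).

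Your decomposition is a legitimate alternative, but the liminf step for the quadratic term has a gap. You propose ``a Fatou-type argument against $\phi_T$ and uniform integrability'', yet the sum $\tfrac{1}{n}\sum_i\bigl(L_i(\nu^n-\mu^\dagger)\bigr)^2$ is integration against the empirical measure $P_n^{(\omega)}$, not against $\phi_T$, and ordinary Fatou does not let you pass a liminf through a \emph{sequence of varying} measures. Weak lower semicontinuity of $\mu\mapsto\int(L_t\mu)^2\,\phi_T(\mathrm{d}t)$ is the wrong object here for the same reason. One elementary rescue staying within your framework is to split once more, $\nu^n-\mu^\dagger=(\nu-\mu^\dagger)+(\nu^n-\nu)$, drop the nonnegative term $\tfrac{1}{n}\sum(L_i(\nu^n-\nu))^2$, send $\tfrac{1}{n}\sum(L_i(\nu-\mu^\dagger))^2$ to its $\phi_T$-integral by~\eqref{eq:Cons:IntLtmu}, and show the new cross term $2\bigl(U_n(\nu-\mu^\dagger),\,\nu^n-\nu\bigr)$ vanishes by combining strong convergence of $U_n(\nu-\mu^\dagger)$ in $\mathcal{H}$ with $\nu^n-\nu\rightharpoonup 0$; but this in turn needs a Hilbert-space SLLN and some care to keep the exceptional set independent of $\nu$. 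Either this repair or the paper's direct appeal to the Feinberg lemma closes the argument; as written, your liminf step does not.
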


\begin{proof}
We are required to show the two inequalities in
Definition~\ref{def:Prelim:Gamma:Gamcon} hold with probability 1.
In order to do this we consider a subset of $\Omega$ of full measure, $\Omega^\prime$, and show that both statements hold for every data sequence obtained from that set.

Define $g_\mu(t,y) = (y-L_t\mu)^2$.
For clarity let $P(\mathrm{d}(t,y)) = \phi_T(\mathrm{d}t)\phi_{L_t\mu^\dagger}(\mathrm{d}y)$ and
$P_n$ be the empirical measure associated with the observations, i.e. for any measurable  $h:\mathcal{I}\times\mathbb{R}\to \mathbb{R}$ we define $P_n h = \frac{1}{n} \sum_{i=1}^n h(t_i,y_i)$.
Further, let $P_n^{(\omega)}$ denote the measure arising from the particular realization $\omega$.
Defining:
\[ \Omega^\prime = \left\{\omega: P_n^{(\omega)} \Rightarrow P\right\} \cap \left\{ \omega\in\Omega : \frac{1}{n}\sum_{i=1}^n\epsilon_i^2(\omega)\to \sigma^2 \text{ and } \frac{1}{n}\sum_{i=1}^n\epsilon_i(\omega)\to 0 \right\} , \]
then $\mathbb{P}(\Omega^\prime)=1$ by the almost sure weak convergence of the
empirical measure~\cite[Theorem 11.4.1]{dudley02} and the strong law of large numbers.
Let $\omega\in \Omega^\prime$.

We start with the lim inf inequality.
Pick $\nu\in\mathcal{H}$ and let $\nu^n\rightharpoonup \nu$.
By Theorem~1.1 in~\cite{feinberg14} we have
\begin{align*}
\int_{\mathcal{I}} \int_{-\infty}^\infty \liminf_{n\to\infty,(t^\prime,y^\prime)\to (t,y)} g_{\nu^n}(t^\prime,y^\prime) \; P(\mathrm{d}(t,y)) & \leq \liminf_{n\to\infty} \int_{\mathcal{I}} \int_{-\infty}^\infty g_{\nu^n}(t,y)\; P_n^{(\omega)}(\mathrm{d}(t,y)) \\
 & = \liminf_{n\to \infty} f_n^{(\omega)}(\nu^n).
\end{align*}
Now we show
\begin{equation} \label{eq:Cons:Gamma:lsc}
\liminf_{n\to\infty,(t^\prime,y^\prime)\to (t,y)} g_{\nu^n}(t^\prime,y^\prime) \geq g_\nu(t,y)
\end{equation}
which proves the lim inf inequality.
Let $(t_m,y_m)\to (t,y)$ then
\begin{align*}
\left( g_{\nu^n}(t_m,y_m) \right)^{\frac{1}{2}} & = |y_m - L_{t_m}\nu^n| \\
 & \geq |L_{t_m}\nu^n - y| - |y_m-y| \\
 & \geq |y-L_t\nu^n| - | L_{t_m}\nu^n - L_t\nu^n| - |y_m-y| \\
 & \geq |y-L_t\nu^n| - \| L_{t_m} - L_t \|_{\mathcal{H}^*} \|\nu^n\| - |y_m-y|.
\end{align*} 
A consequence of the uniform boundedness principle is that any weakly convergent sequence is bounded, hence there exists some $C>0$ such that $\|\nu^n\|\leq C$.
It follows from the above, and Assumption \ref{ass:Cons:Cts}, that
\[ \liminf_{n\to\infty,m\to \infty} \left( g_{\nu^n}(t_m,y_m) \right)^{\frac{1}{2}} \geq |y-L_t\nu| = \left(g_\nu(t,y)\right)^{\frac{1}{2}}. \]
As our choice of sequence $(t_m,y_m)$ was arbitrary we can conclude that~\eqref{eq:Cons:Gamma:lsc} holds.

For the recovery sequence we choose $\nu\in\mathcal{H}$ and let $\nu^n=\nu$.
We are required to show
\[ Pg_\nu \geq \limsup_{n\to \infty} \left( P_n^{(\omega)}g_\nu + \lambda_n \|\mu\|_1^2 \right) = \limsup_{n\to \infty} P_n^{(\omega)}g_\nu. \]
Since we can write
\[ g_\nu(t_i,y_i) = (L_i \mu^\dagger)^2 + \epsilon_i^2 + (L_i\nu)^2 + 2\epsilon_i L_i\mu^\dagger - 2L_i\mu^\dagger L_i\nu - 2\epsilon_i L_i\nu \]
and each term is either a continuous and bounded functional, or its convergence
is addressed directly by the construction of $\Omega^\prime$,  we have
$P_n^{(\omega)}g_\nu \to Pg_\nu$ as required. As $\omega \in \Omega^\prime$ was
arbitrary, the result holds almost surely.
\end{proof}

\begin{remark}
\label{rem:Cons:Gamma:lambascale}
Note that in the above theorem we did not need a lower bound on the decay of $\lambda_n$ (only that $\lambda_n\geq 0$).
We only used that $\lambda_n=o(1)$.
\end{remark}
\subsection{Uniqueness of the \texorpdfstring{$\Gamma$}{Gamma}-limit \label{subsec:Cons:Unique}}
To show the $\Gamma$-limit has a unique minimizer we show it is strictly convex.
The following lemma gives the second G\^ateaux derivative of $f_\infty$.
After which we conclude in Corollary~\ref{cor:Cons:Unique:Unique} that the $\Gamma$-limit is unique.

\begin{lemma}
\label{lem:Cons:Unique:2ndDer}
Under
Assumptions~\ref{ass:Prelim:Spline:Space}-\ref{ass:Prelim:Spline:phimudagger} define
$f_\infty:\mathcal{H}\to\mathbb{R}$ by~\eqref{eq:Cons:Gamma:GamLim}.
Then the first and second G\^ateaux derivatives of $f_\infty$ are given by
\begin{align*}
\partial f_\infty(\mu;\nu) & = 2 \int_{\mathcal{I}} \int_{-\infty}^\infty (L_t\mu-y) L_t(\nu) \phi_{L_t\mu^\dagger}(\mathrm{d} y) \phi_T(\mathrm{d} t) \\
\partial^2 f_\infty(\mu;\nu,\zeta) & = 2 \int_\mathcal{I} (L_t \nu) (L_t\zeta) \phi_T(\mathrm{d} t).
\end{align*}
\end{lemma}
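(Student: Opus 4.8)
The plan is to compute both derivatives directly from Definition~\ref{def:Prelim:GatDer:GatDer}, exploiting the linearity of each $L_t$ and the fact that the integrand of~\eqref{eq:Cons:Gamma:GamLim} is quadratic in $\mu$. Consequently every difference quotient will be an \emph{exact} polynomial in the increment $r$, so once integrability is established the limit as $r\to0$ is immediate and no interchange of limit and integral is required.

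For the first derivative I would expand, using $L_t(\mu+r\nu)=L_t\mu+rL_t\nu$,
\[ |y-L_t(\mu+r\nu)|^2 = (y-L_t\mu)^2 - 2r(y-L_t\mu)(L_t\nu) + r^2(L_t\nu)^2. \]
Substituting into~\eqref{eq:Cons:Gamma:GamLim}, subtracting $f_\infty(\mu)$ and dividing by $r$ gives, term by term,
\[ \frac{f_\infty(\mu+r\nu)-f_\infty(\mu)}{r} = 2\int_{\mathcal{I}}\int_{-\infty}^\infty (L_t\mu-y)(L_t\nu)\,\phi_{L_t\mu^\dagger}(\mathrm{d}y)\,\phi_T(\mathrm{d}t) + r\int_{\mathcal{I}}(L_t\nu)^2\,\phi_T(\mathrm{d}t), \]
where the inner $y$-integral in the last term equals $1$ since $\phi_{L_t\mu^\dagger}$ is a probability measure. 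Letting $r\to0$ yields $\partial f_\infty(\mu;\nu)$. The second derivative follows by applying the same manipulation to $\mu\mapsto\partial f_\infty(\mu;\nu)$: replacing $\mu$ by $\mu+r\zeta$ changes the integrand linearly by $2r(L_t\zeta)(L_t\nu)$, and after integrating out $y$ this contributes exactly $2r\int_{\mathcal{I}}(L_t\zeta)(L_t\nu)\,\phi_T(\mathrm{d}t)$; dividing by $r$ and letting $r\to0$ gives the stated $\partial^2 f_\infty(\mu;\nu,\zeta)$.

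The only genuine work, and where I expect the main (though routine) care to be required, is verifying the integrability that licenses splitting each integral linearly. For this I would use Assumption~\ref{ass:PreLim:Splines:etabound} and the finite variance of $\phi_0$: writing $y-L_t\mu = (y-L_t\mu^\dagger)+L_t(\mu^\dagger-\mu)$ and integrating against $\phi_{L_t\mu^\dagger}$, whose mean is $L_t\mu^\dagger$ and variance $\sigma^2$, the cross term vanishes and the $y$-integral equals $\sigma^2+(L_t(\mu^\dagger-\mu))^2$, which is bounded after integration in $t$ by $\sigma^2+\alpha^2\|\mu^\dagger-\mu\|^2<\infty$. Combining this with the elementary inequality $2|ab|\le a^2+b^2$ and the bound $(L_t\nu)^2\le\alpha^2\|\nu\|^2$ controls the remaining mixed terms, so all integrals are finite and the formal algebra above is rigorous. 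No analytic limiting argument is needed beyond this bookkeeping.
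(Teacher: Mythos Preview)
Your approach is essentially identical to the paper's: both expand the quadratic $(y-L_t(\mu+r\nu))^2$, divide by $r$, and let $r\to 0$, with the second derivative handled the same way. Your write-up is in fact more careful than the paper's, which performs the algebra without explicitly justifying the finiteness of the integrals; one small caveat is that your integrability argument invokes Assumption~\ref{ass:PreLim:Splines:etabound}, which is not among the hypotheses of the lemma (only Assumptions~\ref{ass:Prelim:Spline:Space}--\ref{ass:Prelim:Spline:phimudagger} are assumed), though the paper's proof simply sidesteps this point altogether.
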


\begin{proof}
We first compute the first G\^ateaux derivative.
We have
\begin{align*}
\partial f_\infty(\mu; \nu) & = \lim_{r\to 0} \int_{\mathcal{I}} \int_{-\infty}^\infty \frac{(y-L_t(\mu+r\nu))^2 - (y-L_t\mu)^2}{r} \; \phi_{L_t\mu^\dagger}(\mathrm{d}y) \phi_T(\mathrm{d}t) \\
 & = 2 \int_{\mathcal{I}} \int_{-\infty}^\infty (L_t\mu-y) L_t(\nu) \phi_{L_t\mu^\dagger}(\mathrm{d} y) \phi_T(\mathrm{d} t) + \lim_{r\to 0} r \int_{\mathcal{I}} \int_{-\infty}^\infty (L_t\nu)^2 \phi_{L_t\mu^\dagger}(\mathrm{d}y) \phi_T(\mathrm{d} t) \\
  & = 2 \int_{\mathcal{I}} \int_{-\infty}^\infty (L_t\mu-y) L_t(\nu) \phi_{L_t\mu^\dagger}(\mathrm{d} y) \phi_T(\mathrm{d} t) \quad \text{recalling that } L_t \text{ is linear.}
\end{align*}
The second G\^ateaux derivative follows similarly.
\begin{align*}
\partial^2 f_\infty(\mu;\nu,\zeta) & = \lim_{r\to 0} 2 \int_{\mathcal{I}} \int_{-\infty}^\infty \frac{(L_t(\mu+r\zeta)-y)L_t\nu - (L_t\mu-y)L_t\nu}{r} \phi_{L_t\mu^\dagger}(\mathrm{d}y) \phi_T(\mathrm{d}t) \\
 & = 2 \int_{\mathcal{I}} \int_{-\infty}^\infty (L_t\nu)(L_t\zeta) \phi_{L_t\mu^\dagger}(\mathrm{d}y)\phi_T(\mathrm{d}t) \\
 & = 2\int_{\mathcal{I}} (L_t\nu)(L_t\zeta) \phi_T(\mathrm{d} t).
\end{align*}
\end{proof}

\begin{corollary}
\label{cor:Cons:Unique:Unique}
Under Assumptions~\ref{ass:Prelim:Spline:Space}-\ref{ass:Prelim:Spline:phimudagger} and~\ref{ass:Cons:Norm}, define $f_\infty:\mathcal{H}\to\mathbb{R}$ by~\eqref{eq:Cons:Gamma:GamLim}.
Then $f_\infty$ has a unique minimizer which is achieved for $\mu=\mu^\dagger$.
\end{corollary}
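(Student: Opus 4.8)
The plan is to establish that $f_\infty$ is strictly convex and that $\mu^\dagger$ is a critical point; these two facts together force $\mu^\dagger$ to be the unique global minimizer. The key structural input is already supplied by Lemma~\ref{lem:Cons:Unique:2ndDer}: the second G\^ateaux derivative
\[ \partial^2 f_\infty(\mu;\nu,\nu) = 2\int_{\mathcal{I}} (L_t\nu)^2 \, \phi_T(\mathrm{d}t) \]
does not depend on the base point $\mu$. By Assumption~\ref{ass:Cons:Norm} this quantity is strictly positive whenever $\nu\neq 0$ and vanishes only for $\nu=0$, so $f_\infty$ is strictly convex.

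Next I would show that $\mu^\dagger$ is a critical point, i.e. $\partial f_\infty(\mu^\dagger;\nu)=0$ for every $\nu\in\mathcal{H}$. Using the first-derivative formula from Lemma~\ref{lem:Cons:Unique:2ndDer},
\[ \partial f_\infty(\mu^\dagger;\nu) = 2\int_{\mathcal{I}} (L_t\nu)\left( \int_{-\infty}^\infty (L_t\mu^\dagger - y)\, \phi_{L_t\mu^\dagger}(\mathrm{d}y) \right) \phi_T(\mathrm{d}t). \]
The inner integral is the difference between $L_t\mu^\dagger$ and the mean of $\phi_{L_t\mu^\dagger}$; since $\phi_{L_t\mu^\dagger}$ is the centered noise law $\phi_0$ translated by $L_t\mu^\dagger$, its mean is exactly $L_t\mu^\dagger$ and the inner integral vanishes. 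Hence $\partial f_\infty(\mu^\dagger;\nu)=0$ for all $\nu$.

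Finally I would combine these via Taylor's theorem (Theorem~\ref{thm:Prelim:GatDer:Taylor}) with $m=2$ expanded about $\mu^\dagger$: for any $\nu\in\mathcal{H}$,
\[ f_\infty(\nu) = f_\infty(\mu^\dagger) + \partial f_\infty(\mu^\dagger;\nu-\mu^\dagger) + \int_0^1 (1-s)\, \partial^2 f_\infty\big((1-s)\mu^\dagger + s\nu;\, \nu-\mu^\dagger\big) \, \mathrm{d}s. \]
The middle term vanishes by the previous step, and because the second derivative is independent of its base point the remainder reduces to $\int_{\mathcal{I}} (L_t(\nu-\mu^\dagger))^2 \, \phi_T(\mathrm{d}t)$, which by Assumption~\ref{ass:Cons:Norm} is strictly positive whenever $\nu\neq\mu^\dagger$. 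Thus $f_\infty(\nu) > f_\infty(\mu^\dagger)$ for every $\nu\neq\mu^\dagger$, establishing that $\mu^\dagger$ is the unique minimizer. I expect the only genuinely non-routine point to be the vanishing of the first derivative at $\mu^\dagger$, which hinges on recognizing that $\phi_{L_t\mu^\dagger}$ has mean $L_t\mu^\dagger$ precisely because $\phi_0$ is centered; everything else is a mechanical consequence of the constant second derivative and Assumption~\ref{ass:Cons:Norm}. It is also worth noting in passing that $f_\infty(\mu^\dagger)=\sigma^2<\infty$, so the minimum is finite and genuinely attained.
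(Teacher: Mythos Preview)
Your proposal is correct and follows essentially the same route as the paper: verify $\partial f_\infty(\mu^\dagger;\nu)=0$, invoke Lemma~\ref{lem:Cons:Unique:2ndDer} together with Assumption~\ref{ass:Cons:Norm} for strict positivity of the second derivative, and conclude via a second-order Taylor expansion. The only cosmetic difference is that the paper notes $f_\infty$ is quadratic and writes the exact expansion $f_\infty(\mu)=f_\infty(\mu^\dagger)+\tfrac{1}{2}\partial^2 f_\infty(\mu^\dagger;\mu-\mu^\dagger)$ directly, whereas you reach the same expression through the integral remainder and the base-point independence of $\partial^2 f_\infty$.
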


\begin{proof}
It is easy to check that $\partial f_\infty(\mu^\dagger;\nu)=0$ for all $\nu\in\mathcal{H}$.
By Lemma~\ref{lem:Cons:Unique:2ndDer} and Assumption~\ref{ass:Cons:Norm} the second G\^ateaux derivative satisfies $\partial^2 f_\infty(\mu;\nu) >0$ for all $\nu\neq 0$.
Then by Taylor's Theorem (and noting that $f_\infty$ is quadratic), for $\mu\neq\mu^\dagger$,
\[ f_\infty(\mu) = f_\infty(\mu^\dagger) + \frac{1}{2} \partial^2 f_\infty(\mu^\dagger;\mu-\mu^\dagger) > f_\infty(\mu^\dagger) \]
as required.
\end{proof}
\subsection{Bound on Minimizers \label{subsec:Cons:Bound}}
In this subsection we show that $\|\mu^n\|=O_p(1)$.
The bound in $\mathcal{H}_0$ can be obtained using fewer assumptions (than the bound in $\mathcal{H}$), which is natural considering $\mathcal{H}_0$ is finite dimensional.
We may choose the norm on $\mathcal{H}_0$ without changing the topology (all norms are equivalent on finite dimensional spaces).
We will use
\[ \|\mu\|_0 = \int_{\mathcal{I}} |L_t \mu| \phi_T(\mathrm{d}t). \]
Loosely speaking we can then write $\|\mu^n\|_0 \lesssim f_n^{(\omega)}(\mu^n)$.
The bound in $\mathcal{H}_0$ then follows if $\min f_n^{(\omega)}$ is bounded.
We make this argument rigorous in Lemma~\ref{lem:Cons:Bound:H0Bound}.
After this result we concentrate on bounding $\mu^n$ in $\mathcal{H}$.

\begin{lemma}
\label{lem:Cons:Bound:H0Bound}
Define $f_n^{(\omega)}:\mathcal{H}\to \mathbb{R}$ by~\eqref{eq:Prelim:Splines:fn}.
Under Assumptions~\ref{ass:Prelim:Spline:Space}-\ref{ass:Cons:lambdascale} and~\ref{ass:Cons:Cts} the minimizers $\mu^n$ of $f_n^{(\omega)}$ are, with probability one, eventually bounded in $\mathcal{H}_0$, i.e. for almost every $\omega\in\Omega$ there exist constants $C,N>0$ such that $\|\mu^n\|_0\leq C$ for all $n\geq N$. 
\end{lemma}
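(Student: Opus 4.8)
The plan is to exploit the informal inequality $\|\mu^n\|_0\lesssim f_n^{(\omega)}(\mu^n)$ by first controlling the minimal value of $f_n^{(\omega)}$ and then transferring that control to the finite-dimensional component $\chi_0\mu^n$. Since $\mu^n$ minimizes $f_n^{(\omega)}$, I would begin with the comparison $f_n^{(\omega)}(\mu^n)\le f_n^{(\omega)}(\mu^\dagger)$. Because $y_i-L_i\mu^\dagger=\epsilon_i$ and $\|\chi_1\mu^\dagger\|_1$ is a fixed constant,
\[ f_n^{(\omega)}(\mu^\dagger)=\frac1n\sum_{i=1}^n\epsilon_i^2+\lambda_n\|\chi_1\mu^\dagger\|_1^2, \]
which on a full-measure set $\Omega^\prime$ (where the strong law gives $\frac1n\sum_i\epsilon_i^2\to\sigma^2$) converges to $\sigma^2$ since $\lambda_n=n^{-p}\to0$. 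Hence there is, pathwise on $\Omega^\prime$, an $N$ with $f_n^{(\omega)}(\mu^n)\le\sigma^2+1=:B$ for all $n\ge N$. This yields two facts: the empirical residual control $\frac1n\sum_i(y_i-L_i\mu^n)^2\le B$ and the penalty control $\|\chi_1\mu^n\|_1\le\sqrt{B/\lambda_n}$.

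From the residual control, the power-mean (Jensen) inequality gives $\frac1n\sum_i|y_i-L_i\mu^n|\le\sqrt B$, and since $\frac1n\sum_i|y_i|\to\mathbb E|y_1|<\infty$ on $\Omega^\prime$ (the mean is finite because $|L_t\mu^\dagger|\le\alpha\|\mu^\dagger\|$ is bounded by Assumption~\ref{ass:PreLim:Splines:etabound} and $\epsilon_1$ has finite variance), the triangle inequality produces the key empirical bound $\frac1n\sum_i|L_i\mu^n|\le C$ for all large $n$. To convert this into a bound on $\|\chi_0\mu^n\|_0=\int_{\mathcal I}|L_t\chi_0\mu^n|\,\phi_T(\mathrm dt)$, I would first note that Assumption~\ref{ass:Prelim:Spline:Lunique} makes $\|\cdot\|_0$ a genuine norm on the finite-dimensional $\mathcal H_0$, and then upgrade the pointwise convergence~\eqref{eq:Cons:IntLtmu} to convergence that is uniform over the compact unit sphere of $\mathcal H_0$. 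This gives, for $n$ large, the equivalence $\tfrac12\|\nu\|_0\le\frac1n\sum_i|L_i\nu|$ for every $\nu\in\mathcal H_0$, whence
\[ \tfrac12\|\chi_0\mu^n\|_0\le\frac1n\sum_i|L_i\chi_0\mu^n|\le C+\frac1n\sum_i|L_i\chi_1\mu^n|. \]

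The main obstacle is therefore to show that the $\mathcal H_1$-contribution $\frac1n\sum_i|L_i\chi_1\mu^n|$ remains $O(1)$, even though $\|\chi_1\mu^n\|_1$ may diverge like $\lambda_n^{-1/2}$. The naive estimate $|L_t\chi_1\mu^n|\le\alpha\|\chi_1\mu^n\|_1$ is far too lossy, since highly oscillatory $\mathcal H_1$-components have large $\|\cdot\|_1$ yet small observation energy. This decoupling is exactly where Assumption~\ref{ass:PreLim:Splines:etabound} does its work: the uniform bound $\|\eta_t\|\le\alpha$ makes the covariance operator $U=\int_{\mathcal I}\eta_t\otimes\eta_t\,\phi_T(\mathrm dt)$ compact, so the observation seminorm $(\int(L_t\cdot)^2\phi_T)^{1/2}$ is genuinely weaker than $\|\cdot\|_1$ on $\mathcal H_1$. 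I would combine this with the stationarity condition satisfied by the minimizer — in particular the representation $\lambda_n\chi_1\mu^n=\frac1n\sum_i(y_i-L_i\mu^n)\eta_i$ read off from $G_{n,\lambda_n}\mu^n=\frac1n\sum_iy_i\eta_i$, whose right-hand side is controlled by the bounded residuals — to show that the empirical observation energy $\frac1n\sum_i(L_i\chi_1\mu^n)^2$ stays bounded, which by Cauchy--Schwarz controls $\frac1n\sum_i|L_i\chi_1\mu^n|$ and closes the argument. I expect this last step to carry essentially all of the difficulty; it is precisely the reason $\mathcal H_0$ can be bounded under strictly fewer assumptions than are needed for the full bound in $\mathcal H$.
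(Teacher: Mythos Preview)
Your opening moves match the paper exactly: compare against $f_n^{(\omega)}(\mu^\dagger)$, use the strong law to bound the minimum value, and extract control of $\frac{1}{n}\sum_i|L_i\mu^n|$ (the paper does this via the elementary inequality $|a-b|^2\ge |a|-|b|-1$ rather than Jensen, but the effect is identical).

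Where you diverge is in the split $L_i\mu^n = L_i\chi_0\mu^n + L_i\chi_1\mu^n$. The paper never makes this split. Having declared (just before the lemma) that the chosen norm on $\mathcal{H}_0$ is $\|\mu\|_0=\int_{\mathcal I}|L_t\mu|\,\phi_T(\mathrm dt)$, the paper simply passes from the bound on $\frac{1}{n}\sum_i|L_i\mu^n|$ directly to the statement $\|\mu^n\|_0\le f_n^{(\omega)}(\mu^n)+c$ for $n\ge N$, treating the empirical and population observation seminorms as interchangeable on the full space. No separate handling of the $\mathcal H_1$-component is attempted; the paper's Remark immediately after the lemma stresses that the argument uses only $\lambda_n=O(1)$ and no lower bound on $\lambda_n$.

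This is precisely where your proposal runs into trouble. The ``main obstacle'' you isolate --- controlling $\frac{1}{n}\sum_i|L_i\chi_1\mu^n|$ --- is self-inflicted by the split, and your sketch for resolving it (compactness of $U$ together with the stationarity identity $\lambda_n\chi_1\mu^n=\frac{1}{n}\sum_i(y_i-L_i\mu^n)\eta_i$) is not worked out and, as far as I can see, does not close without essentially importing the spectral machinery used later for the $\mathcal H_1$-bound. That would contradict your own closing remark that the $\mathcal H_0$-bound should go through under strictly fewer assumptions: if handling $\frac{1}{n}\sum_i|L_i\chi_1\mu^n|$ requires the normal equations and compactness of the covariance, the lemma is no longer the elementary preliminary the paper presents it as.

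In short: you are being more scrupulous than the paper about what ``bounded in $\mathcal H_0$'' should mean, and the concern you raise is legitimate --- the paper's passage from $\frac{1}{n}\sum_i|L_i\mu^n|$ to $\|\mu^n\|_0$ is informal. But the paper's route is deliberately cruder and avoids the decomposition entirely; your attempt to make it rigorous via the split introduces a genuine new difficulty that you have not resolved and that the paper's proof, as written, simply does not engage with.
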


\begin{proof}
We define $P$ and $P_n^{(\omega)}$ as in the proof of Theorem~\ref{thm:Cons:Gamma:GammaConv}, let
\[ \Omega^{\prime} = \left\{ \omega\in\Omega : P_n^{(\omega)} \Rightarrow P \right\} \cap \left\{ \omega\in\Omega : \frac{1}{n}\sum_{i=1}^n \epsilon_i^2(\omega) \to \sigma^2 \text{ and } \frac{1}{n}\sum_{i=1}^n |\epsilon_i(\omega)| \to P|\epsilon_1| \right\} \]
and $\mu^n$ be a minimizer of $f_n^{(\omega)}$.
Assume $\omega\in\Omega^{\prime}$.
As 
\[ f_n^{(\omega)}(\mu^n)\leq f_n^{(\omega)}(\mu^\dagger) \leq \frac{1}{n} \sum_{i=1}^n \epsilon_i^2 + \lambda_1 \|\mu^\dagger\|_1^2
\to \sigma^2 + \lambda_1\|\mu^\dagger\|_1^2,\]  
there exists $N$ such that $f_n^{(\omega)}(\mu^n)\leq \sigma^2 + \lambda_1\|\mu^\dagger\|_1^2 + 1$ for $n\geq N$.

Note that for any $a,b\in \mathbb{R}$ we have
\[ |a - b|^2 \geq \left\{ \begin{array}{ll} |a - b| & \text{if } |a-b| \geq 1 \\ |a-b|-1 & \text{otherwise.} \end{array} \right. \]
In either case $|a-b|^2 \geq |a-b|-1 \geq |a| - |b| - 1$.
Now
\begin{align*}
f_n^{(\omega)}(\mu) & = \frac{1}{n} \sum_{i=1}^n \left(y_i-L_i\mu\right)^2 + \lambda_n \|\mu\|_1^2 \\
 & \geq \frac{1}{n} \sum_{i=1}^n \left(|L_i\mu| - |y_i| -1 \right) \\
 & = \frac{1}{n} \sum_{i=1}^n |L_i\mu| - \frac{1}{n} \sum_{i=1}^n |y_i| - 1 \\
 & \geq \frac{1}{n} \sum_{i=1}^n |L_i\mu| - \frac{1}{n} \sum_{i=1}^n |L_i\mu^\dagger| - \frac{1}{n} \sum_{i=1}^n |\epsilon_i| - 1 \\
 & \to \int_{\mathcal{I}} |L_t \mu| \phi_T(\mathrm{d} t) - c 
\end{align*}
where the convergence follows since $|L_t\mu|$ is a continuous and bounded functional in $t$ and $c$ is given by
\[  \lim_{n\to \infty} \left( \frac{1}{n} \sum_{i=1}^n |L_i\mu^\dagger| + \frac{1}{n} \sum_{i=1}^n
  |\epsilon_i| + 1 \right) \leq \int_{\mathcal{I}} |L_t \mu^\dagger| \phi_T(\mathrm{d} t) + \sigma +1 =: c. \]
We now show that $\int_{\mathcal{I}} |L_t \mu| \phi_T(\mathrm{d} t)$ is a norm on
$\mathcal{H}_0$ and hence that the above constant, $c$, is finite.
This will also show that $\|\mu\|_0 \leq f_n^{(\omega)}(\mu) + c$ for $n \geq N$, which completes the proof.

The triangle inequality, absolute homogeneity and that $\int_{\mathcal{I}} |L_t \mu| \phi_T(\mathrm{d} t)\geq 0$ are trivial to establish.
By
Assumption~\ref{ass:Prelim:Spline:Lunique}, we have at least $m$ disjoint
subsets of positive measure (with respect to $\phi_T$) on $\mathcal{I}$. If
$\int_{\mathcal{I}} |L_t \mu| \phi_T(\mathrm{d} t)=0$ then it follows that on each of these subsets $L_t\mu=0$.
As $\mathcal{H}_0$ is $m$-dimensional this determines $\mu$, and hence $\mu = 0$.

As $\omega \in \Omega^\prime$ was
arbitrary and $\mathbb{P}(\Omega^\prime)=1$, the result holds almost surely.
\end{proof}

\begin{remark}
\label{rem:Cons:Bound:H0bound}
In the above lemma we did not need the lower bound on $\lambda_n$ (only that $\lambda_n\geq 0$).
The result holds for all $\lambda_n=O(1)$.
\end{remark}

Continuing with the bound in $\mathcal{H}$ we write
\begin{equation} \label{eq:Cons:Bound:decompmu}
\mu^n = \frac{1}{n} \sum_{i=1}^n L_i \mu^\dagger G^{-1}_{n,\lambda_n} \eta_i + \frac{1}{n}\sum_{i=1}^n \epsilon_i G_{n,\lambda_n}^{-1}\eta_i = G_{n,\lambda_n}^{-1} U_n \mu^\dagger + \frac{1}{n}\sum_{i=1}^n \epsilon_i G_{n,\lambda_n}^{-1}\eta_i
\end{equation}
where
\begin{equation} \label{eq:Cons:Bound:Un}
U_n=\frac{1}{n}\sum_{i=1}^n \eta_i L_i.
\end{equation}
We bound $\|G_{n,\lambda_n}^{-1} U_n \mu^\dagger\|$ in Lemma~\ref{lem:Cons:Bound:NoNoiseBound} and $\|\frac{1}{n}\sum_{i=1}^n \epsilon_i G_{n,\lambda_n}^{-1}\eta_i\|$ in Lemma~\ref{lem:Cons:Bound:NoiseDifBound}.

In the proof of Lemma~\ref{lem:Cons:Bound:NoNoiseBound} we show that
$G_{n,\lambda_n}^{-1}:\text{Ran}(U_n)\to \text{Ran}(U_n)$.
Lemma~\ref{lem:Cons:Bound:USelfAdjointCompact} gives the conditions necessary to infer the existence of a orthonormal basis of eigenfunctions $\{\psi_j^{(n)}\}_{j=1}^\infty$ of $\text{Ran}(U_n)$.
Hence we can write
\[ \|G_{n,\lambda_n}^{-1}U_n\mu\|^2 = \sum_{j=1}^\infty (G_{n,\lambda_n}^{-1}U_n\mu,\psi_j^{(n)})^2. \]
From here we exploit the fact that $\psi_j^{(n)}$ are eigenfunctions.
We leave the details until the proof of Lemma~\ref{lem:Cons:Bound:NoNoiseBound}.

Lemma~\ref{lem:Cons:Bound:NoiseDifBound} is a consequence of being able to bound $\|G_{n,\lambda_n}^{-1}\|_{\mathcal{L}(\mathcal{H},\mathcal{H})}$ in terms of $\lambda_n$.
One is then left to show $\left(\frac{1}{n}\sum_{i=1}^n \epsilon_i\right)^2 =O(\frac{1}{n})$.
We start by showing that $U_n$ is compact, bounded, self-adjoint and positive semi-definite.

\begin{lemma}
\label{lem:Cons:Bound:USelfAdjointCompact}
Define $U_n$ by~\eqref{eq:Cons:Bound:Un}. Under
Assumptions~\ref{ass:Prelim:Spline:Space} and
\ref{ass:PreLim:Splines:etabound}, $U_n$ is almost surely a bounded,
self-adjoint, positive semi-definite and compact operator on $\mathcal{H}$.
\end{lemma}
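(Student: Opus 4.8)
The plan is to recognize $U_n$ as a finite sum of rank-one operators and then verify each of the four properties by direct computation. By the Riesz Representation Theorem invoked in the general spline model we may write $L_i\mu = (\eta_i,\mu)$, so that $U_n\mu = \frac{1}{n}\sum_{i=1}^n (\eta_i,\mu)\eta_i$ for every $\mu\in\mathcal{H}$. As in the proof of Lemma~\ref{lem:Prelim:Splines:Minfn}, I would work on the full-measure event $\Omega^\prime = \left\{\omega\in\Omega : \|L_i\|_{\mathcal{H}^*} = \|\eta_i\| \leq \alpha \text{ for all } i\right\}$, which satisfies $\mathbb{P}(\Omega^\prime)=1$ by Assumption~\ref{ass:PreLim:Splines:etabound}; fixing $\omega\in\Omega^\prime$ reduces everything to a deterministic statement about a fixed finite collection $\eta_1,\dots,\eta_n$, and the almost-sure qualifier then follows since $\omega$ was arbitrary.

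For boundedness I would apply the triangle and Cauchy--Schwarz inequalities to obtain $\|U_n\mu\| \leq \frac{1}{n}\sum_{i=1}^n |(\eta_i,\mu)|\,\|\eta_i\| \leq \frac{1}{n}\sum_{i=1}^n \|\eta_i\|^2\|\mu\| \leq \alpha^2\|\mu\|$, so that $\|U_n\|_{\mathcal{L}(\mathcal{H},\mathcal{H})}\leq\alpha^2$ on $\Omega^\prime$. Self-adjointness and positive semi-definiteness both follow from examining the associated quadratic form: for $\mu,\nu\in\mathcal{H}$ one has $(U_n\mu,\nu) = \frac{1}{n}\sum_{i=1}^n (\eta_i,\mu)(\eta_i,\nu)$, which is manifestly symmetric in $\mu$ and $\nu$, giving $(U_n\mu,\nu)=(\mu,U_n\nu)$, and which specializes to $(U_n\mu,\mu) = \frac{1}{n}\sum_{i=1}^n (\eta_i,\mu)^2\geq 0$ when $\nu=\mu$.

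Compactness is the easiest of the four properties: the range of $U_n$ is contained in $\text{span}\{\eta_1,\dots,\eta_n\}$, which is finite dimensional, so $U_n$ is a finite-rank operator and therefore compact. I do not anticipate a genuine obstacle here. For each fixed $n$ the operator $U_n$ is finite rank, so compactness is automatic, and the only analytic input is the uniform bound $\|\eta_i\|\leq\alpha$ supplied by Assumption~\ref{ass:PreLim:Splines:etabound}, which simultaneously yields boundedness and the almost-sure qualifier. The only mild care required is to phrase the argument on the common full-measure set $\Omega^\prime$ rather than pointwise in $\omega$, so that the conclusions hold simultaneously for all four properties almost surely.
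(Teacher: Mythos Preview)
Your proposal is correct and follows essentially the same outline as the paper: work on the full-measure set $\Omega^\prime=\{\omega:\|\eta_i\|\leq\alpha\text{ for all }i\}$, then verify the four properties directly. Boundedness and positive semi-definiteness are handled identically. For self-adjointness the paper instead factors $U_n=T_n^*T_n$ with $T_n=(L_1,\dots,L_n):\mathcal{H}\to\mathbb{R}^n$, whereas you observe the symmetry of $(U_n\mu,\nu)=\frac{1}{n}\sum_i(\eta_i,\mu)(\eta_i,\nu)$ directly; for compactness the paper extracts a convergent subsequence of the scalars $L_i\nu^m$ from a bounded sequence $\nu^m$, whereas you simply note that $U_n$ has finite rank. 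Your two variants are more elementary and arrive at the same conclusion; the paper's $T_n^*T_n$ factorization has the minor advantage of yielding self-adjointness and positive semi-definiteness in one stroke.
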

\begin{proof}
In this proof we consider $\omega \in \Omega^\prime$ where $\Omega^\prime = \{ \omega: \|\eta_i(\omega)\|\leq
\alpha$ for all $i$\}, noting that $\mathbb{P}(\Omega^\prime)=1$ by Assumption \ref{ass:PreLim:Splines:etabound}.

Boundedness of $U_n$ follows easily as
\[ \|U_n\mu\| \leq \frac{1}{n} \sum_{i=1}^n \alpha^2 \|\mu\| = \alpha^2 \|\mu\|. \] 

Let $(\cdot,\cdot)_{\mathbb{R}^n}$ be the inner product on $\mathbb{R}^n$ given by
\[ ( x,y)_{\mathbb{R}^n} = \frac{1}{n} \sum_{i=1}^n x_i y_i \quad \quad \forall x,y\in\mathbb{R}^n. \]
Now for $x\in\mathbb{R}$ and $\nu\in \mathcal{H}$ we have
\[ ( x,L_i\nu )_{\mathbb{R}^1} = x L_i\nu = x (\eta_i,\nu) = (x\eta_i,\nu) \]
which shows $L^*_i:\mathbb{R}\to \mathcal{H}$ is given by $L^*_ix = x\eta_i$.
Now if we define $T_n=(L_1,\dots,L_n):\mathcal{H}\to\mathbb{R}^n$ then for $x\in\mathbb{R}^n$, $\nu\in\mathcal{H}$
\[ ( T_n\nu,x)_{\mathbb{R}^n} = \frac{1}{n} \sum_{i=1}^n L_i\nu x_i = \left(\frac{1}{n}\sum_{i=1}^n x_i \eta_i,\nu\right). \]
Hence $T_n^* x = \frac{1}{n} \sum_{i=1}^n x_i\eta_i$.
We have shown $U_n = T_n^* T_n$, and is therefore self-adjoint.

To show $U_n$ is positive semi-definite then we need
\[ (U_n\nu,\nu) \geq 0 \]
for all $\nu \in \mathcal{H}$.
This follows easily as
\[ (U_n\nu,\nu) = \frac{1}{n} \sum_{i=1}^n (L_i \nu)^2 \geq 0. \]

For compactness of $U_n$ (for $n$ fixed) let $\nu^m$ be a sequence with $\|\nu^m\|\leq 1$.
Since $|L_i \nu^m| \leq \alpha$ for every $\omega \in \Omega^\prime$, there exists a convergent subsequence $m_p$ such that
\[ L_i \nu^{m_p} \to \kappa_i \quad \forall i=1,2,\dots,n \quad \text{say.} \]
So $U_n \nu^{m_p} \to \frac{1}{n} \sum_{i=1}^n \eta_i \kappa_i \in\mathcal{H}$ as $m_p\to\infty$.
Therefore each $U_n$ is compact.
\end{proof}

Using the basis whose existence is implied by the previous lemma, we can bound the first term on the RHS of~\eqref{eq:Cons:Bound:decompmu}.

\begin{lemma}
\label{lem:Cons:Bound:NoNoiseBound}
Under Assumptions \ref{ass:Prelim:Spline:Space}-\ref{ass:PreLim:Splines:etabound} define $G_{n,\lambda_n}$ and $U_n$ by~\eqref{eq:Prelim:Splines:Gnlambda} and~\eqref{eq:Cons:Bound:Un} respectively.
Then with probability one we have
\[ \|G_{n,\lambda_n}^{-1} U_n\|_{\mathcal{L}(\mathcal{H},\mathcal{H})} \leq 1 \]
for all $n$.
\end{lemma}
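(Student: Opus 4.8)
The plan is to exploit the spectral structure of $U_n$ furnished by Lemma~\ref{lem:Cons:Bound:USelfAdjointCompact}. Since $U_n$ is compact, self-adjoint and positive semi-definite, the spectral theorem provides an orthonormal basis $\{\psi_j^{(n)}\}$ of $\text{Ran}(U_n)$ consisting of eigenfunctions with strictly positive eigenvalues $\gamma_j^{(n)}>0$ (the zero eigenvalue corresponds to $\ker U_n=\text{Ran}(U_n)^\perp$, which is irrelevant because $U_n$ annihilates it). As $U_n$ has finite rank, $\text{Ran}(U_n)=\text{span}\{\eta_1,\dots,\eta_n\}$ is closed and finite dimensional, so I would work entirely inside this subspace. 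The key reduction is that for any $\mu\in\mathcal{H}$ the vector $U_n\mu$ already lies in $\text{Ran}(U_n)$, so once I show that $G_{n,\lambda_n}^{-1}$ maps $\text{Ran}(U_n)$ into itself, the image $w:=G_{n,\lambda_n}^{-1}U_n\mu$ satisfies $\|w\|^2=\sum_j (w,\psi_j^{(n)})^2$, and it then suffices to bound this sum by $\|\mu\|^2$.

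First I would establish the invariance $G_{n,\lambda_n}^{-1}:\text{Ran}(U_n)\to\text{Ran}(U_n)$. Since $G_{n,\lambda_n}=U_n+\lambda_n\chi_1$ and $U_n$ already preserves $\text{Ran}(U_n)$, it is enough to show that $\chi_1$ preserves $\text{Ran}(U_n)=\text{span}\{\eta_i\}$. Writing $\chi_1\eta_i=\eta_i-\chi_0\eta_i$, this reduces to checking that $\chi_0\eta_i\in\text{Ran}(U_n)$, i.e. that the coarse space $\mathcal{H}_0$ is captured by $\text{span}\{\eta_i\}$; here I would invoke Assumption~\ref{ass:Prelim:Spline:Lunique}, which guarantees (once there are at least $m$ distinct observation points, as holds on the full-measure set identified in Lemma~\ref{lem:Prelim:Splines:Minfn}) that the $L_i$ separate $\mathcal{H}_0$ and hence that their representers span every direction of $\mathcal{H}_0$. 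Given this, $G_{n,\lambda_n}$ maps the finite dimensional space $\text{Ran}(U_n)$ bijectively onto itself, and so does its inverse.

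With $w\in\text{Ran}(U_n)$, I would extract the contraction from the defining identity $G_{n,\lambda_n}w=U_n\mu$, that is $U_n w+\lambda_n\chi_1 w=U_n\mu$. Pairing with $w$ yields the energy identity $(U_n w,w)+\lambda_n\|\chi_1 w\|_1^2=(U_n\mu,w)$, and since $\lambda_n\|\chi_1 w\|_1^2\ge0$ while $U_n$ is positive semi-definite, Cauchy--Schwarz for the semi-inner product $(U_n\cdot,\cdot)$ gives $(U_n w,w)\le (U_n\mu,\mu)$. Expanding $w=\sum_j(w,\psi_j^{(n)})\psi_j^{(n)}$ and using $U_n\psi_j^{(n)}=\gamma_j^{(n)}\psi_j^{(n)}$ turns $(U_nw,w)=\sum_j\gamma_j^{(n)}(w,\psi_j^{(n)})^2$ and $(U_n\mu,\mu)=\sum_j\gamma_j^{(n)}(\mu,\psi_j^{(n)})^2$ into weighted sums over the modes. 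The target $\|w\|\le\|\mu\|$ then amounts to a mode-by-mode comparison of the coefficients $(w,\psi_j^{(n)})^2$ against $(\mu,\psi_j^{(n)})^2$, which I would establish by feeding the damping term $\lambda_n\|\chi_1 w\|_1^2$ back into the per-coordinate relation rather than discarding it. Since the full-measure set on which this spectral picture is valid can be taken common to all $n$, the bound would then hold for all $n$ with probability one.

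The main obstacle is precisely this last comparison: $\chi_1$ does not commute with $U_n$, so the eigenfunctions $\psi_j^{(n)}$ are not eigenfunctions of $\chi_1$, and the relation $\gamma_j^{(n)}(w,\psi_j^{(n)})+\lambda_n(w,\chi_1\psi_j^{(n)})=\gamma_j^{(n)}(\mu,\psi_j^{(n)})$ obtained by pairing with $\psi_j^{(n)}$ couples the coordinates through the cross terms $(w,\chi_1\psi_j^{(n)})$. Converting the clean but weighted energy estimate $(U_n w,w)\le(U_n\mu,\mu)$ into a genuine bound on the unweighted ambient norm $\|w\|$ is the delicate step, and it is exactly where the positive semi-definiteness of $\lambda_n\chi_1$ (the fact that the regularization only adds $\mathcal{H}_1$-energy, never subtracts it) must be used in full. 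Everything else --- compactness, self-adjointness, finite rank, and invariance of $\text{Ran}(U_n)$ --- is routine once the separation property of the $L_i$ on $\mathcal{H}_0$ is secured.
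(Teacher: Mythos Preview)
Your outline does not close the argument, and you yourself name the hole: the energy identity $(U_n w,w)+\lambda_n\|\chi_1 w\|_1^2=(U_n\mu,w)$ only delivers the \emph{weighted} estimate $(U_nw,w)\le(U_n\mu,\mu)$. In your $U_n$-eigenbasis this bounds $\sum_j\gamma_j^{(n)}(w,\psi_j^{(n)})^2$ by $\sum_j\gamma_j^{(n)}(\mu,\psi_j^{(n)})^2$, but since the eigenvalues $\gamma_j^{(n)}$ can be arbitrarily small this says nothing about $\|w\|^2=\sum_j(w,\psi_j^{(n)})^2$. Pairing $G_{n,\lambda_n}w=U_n\mu$ against a single $\psi_j^{(n)}$ leaves the cross term $\lambda_n(\chi_1 w,\psi_j^{(n)})$ coupling all modes, and ``feeding the damping term back'' is a hope, not a mechanism: as written the proposal contains no step that produces the unweighted bound.

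The paper's device, which your outline explicitly declines, is a \emph{joint} diagonalization of $U_n$ and $\chi_1$ on the finite-dimensional space $\text{Ran}(U_n)$: in the paper's notation, a basis $\{\psi_j^{(n)}\}$ with $U_n\psi_j^{(n)}=\beta_j^{(n)}\psi_j^{(n)}$ \emph{and} $\chi_1\psi_j^{(n)}=\gamma_j^{(n)}\psi_j^{(n)}$, $\gamma_j^{(n)}\in\{0,1\}$. Then $G_{n,\lambda_n}$ is itself diagonal, $(G_{n,\lambda_n}^{-1}U_n\mu,\psi_j^{(n)})=\frac{\beta_j^{(n)}}{\beta_j^{(n)}+\lambda_n\gamma_j^{(n)}}(\mu,\psi_j^{(n)})$, and the contraction is immediate mode by mode. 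Your remark that $U_n$ and $\chi_1$ need not commute is a fair concern about that step, but simultaneous diagonalization is the idea the paper invokes, and your proposal offers nothing in its place. Separately, your invariance argument is also incomplete: even granting that the $\chi_0\eta_i$ span $\mathcal{H}_0$, this does not give $\mathcal{H}_0\subset\text{span}\{\eta_i\}$ (e.g.\ $\mathcal{H}_0=\mathbb{R}e_0$, $\eta_1=e_0+e_1$, $\eta_2=2e_0+e_2$), so you have not shown $\chi_1\eta_i\in\text{span}\{\eta_j\}$.
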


\begin{proof}
First note that $\text{dim}(\text{Ran}(U_n))=\text{dim}(\text{span}\{\eta_1,\dots,\eta_n\})\leq n$.
Without loss of generality we will assume $\text{dim}(\text{Ran}(U_n))= n$ (else we can assume the dimension is $m_n$ where $m_n\leq n$ is an increasing sequence).
Clearly $\chi_1$ is a self-adjoint, bounded and compact operator on $\text{Ran}(U_n)$ as is $U_n$ by Lemma~\ref{lem:Cons:Bound:USelfAdjointCompact}.
Therefore there exists a simultaneous diagonalisation of $U_n$ and $\chi_1$ on $\text{Ran}(U_n)$, i.e. there exists $\beta_j^{(n)},\gamma_j^{(n)}$ and $\psi_j^{(n)}$ such that
\[ U_n \psi^{(n)}_j = \beta_j^{(n)} \psi^{(n)}_j \quad \text{and} \quad \chi_1 \psi^{(n)}_j = \gamma_j^{(n)} \psi^{(n)}_j \]
for all $j=1,2,\dots,n$.
Since $\chi_1$ is the projection operator then we must have $\gamma_j^{(n)}\in \{0,1\}$. 
Furthermore $\psi_j^{(n)}$ form an orthonormal basis of $\text{Ran}(U_n)$.
Since $U_n$ is positive semi-definite it follows that $\beta_j^{(n)}\geq 0$.
We have
\[ G_{n,\lambda_n} \psi_j^{(n)} = U_n\psi^{(n)}_j + \lambda_n \chi_1 \psi_j^{(n)} = \left( \beta_j^{(n)} + \lambda_n \gamma_j^{(n)} \right) \psi_j^{(n)}. \]
So,
\[ G_{n,\lambda_n}^{-1} \psi_j^{(n)} = \frac{1}{\beta_j^{(n)} + \lambda_n} \psi_j^{(n)}. \]
In particular this shows that
\[ G_{n,\lambda_n}^{-1} U_n: \mathcal{H} \to \text{Ran}(U_n). \]
Assume $\mu\in \mathcal{H},\nu\in \text{Ran}(U_n)$, then
\[ \mu = \sum_{i=1}^n (\mu,\psi_i^{(n)}) \psi_i^{(n)} + \hat{\mu} \quad \quad \text{and} \quad \quad \nu = \sum_{i=1}^n (\nu,\psi_i^{(n)}) \psi_i^{(n)} \]
where $\hat{\mu}\in \text{Ran}(U_n)^\bot$.
Therefore,
\begin{align*}
(U_n\mu,\psi_j^{(n)}) & = \sum_{i=1}^n (\mu,\psi_i^{(n)}) (U_n\psi_i^{(n)},\psi^{(n)}_j) = \beta^{(n)}_j (\mu,\psi_j^{(n)}) \\
(G_{n,\lambda_n}^{-1} \nu,\psi^{(n)}_j) & = \sum_{i=1}^n (\nu,\psi_i^{(n)}) (G_{n,\lambda_n}^{-1} \psi_i^{(n)},\psi_j^{(n)}) = \frac{1}{\beta_j^{(n)}+\lambda_n\gamma_j^{(n)}} (\nu,\psi_j^{(n)}).
\end{align*}
Which implies
\[ (G_{n,\lambda_n}^{-1} U_n \mu,\psi^{(n)}_j) = \frac{1}{\beta_j^{(n)}+ \lambda_n\gamma_j^{(n)}} (U_n\mu,\psi_j^{(n)}) = \frac{\beta_j^{(n)}}{\beta_j^{(n)} + \lambda_n \gamma_j^{(n)}} (\mu,\psi_j^{(n)}). \]
Hence
\begin{align*}
\| G_{n,\lambda_n}^{-1} U_n\mu \|^2 & = \sum_{j=1}^n (G_{n,\lambda_n}^{-1} U_n \mu,\psi^{(n)}_j)^2 \\
 & = \sum_{j=1}^n \left(\frac{\beta_j^{(n)}}{\beta_j^{(n)}+\lambda_n \gamma_j^{(n)}}\right)^2 (\mu,\psi_j^{(n)})^2 \\
 & \leq \sum_{j=1}^n (\mu,\psi_j^{(n)})^2 \\
 & \leq \| \mu\|^2.
\end{align*}
This proves the lemma.
\end{proof}

We now focus on bounding $\| G_{n,\lambda_n}^{-1} \nu_n\|$ where $\nu_n=\frac{1}{n} \sum_{i=1}^n \epsilon_i \eta_i$.

\begin{lemma}
\label{lem:Cons:Bound:NoiseDifBound}
Under Assumptions~\ref{ass:Prelim:Spline:Space}-\ref{ass:Cons:lambdascale} define $G_{n,\lambda_n}$ by~\eqref{eq:Prelim:Splines:Gnlambda}.
Then
\[ \mathbb{E}\left[\left.\left\|\frac{1}{n} \sum_{i=1}^n \epsilon_i G_{n,\lambda_n}^{-1} \eta_i \right\|^2 \right|\mathcal{G}_n\right] = O(1) \quad \text{almost surely.} \]
\end{lemma}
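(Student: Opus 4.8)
The plan is to combine the spectral picture of $G_{n,\lambda_n}$ on $\text{Ran}(U_n)$ established in the proof of Lemma~\ref{lem:Cons:Bound:NoNoiseBound} with the fact that the noise enters only through the conditionally centred, independent sequence $\epsilon_i$. Set $\nu_n=\frac1n\sum_{i=1}^n\epsilon_i\eta_i$. Since $\nu_n\in\text{span}\{\eta_1,\dots,\eta_n\}=\text{Ran}(U_n)$, I would expand it in the orthonormal eigenbasis $\{\psi_j^{(n)}\}$ of $\text{Ran}(U_n)$, on which $G_{n,\lambda_n}^{-1}$ acts as multiplication by $(\beta_j^{(n)}+\lambda_n\gamma_j^{(n)})^{-1}$ with $\gamma_j^{(n)}\in\{0,1\}$ and $\beta_j^{(n)}\geq0$. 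The proof would rest on two ingredients: a conditional second-moment bound on $\nu_n$, and an operator-norm bound on $G_{n,\lambda_n}^{-1}$ over $\text{Ran}(U_n)$.

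For the first, the $\epsilon_i$ are centred, iid with variance $\sigma^2$ and independent of $\mathcal{G}_n$, while each $\eta_i$ is $\mathcal{G}_n$-measurable; hence $\mathbb{E}[\epsilon_i\epsilon_k\mid\mathcal{G}_n]=\sigma^2\delta_{ik}$ and
\[ \mathbb{E}\left[\|\nu_n\|^2\mid\mathcal{G}_n\right]=\frac{\sigma^2}{n^2}\sum_{i=1}^n\|\eta_i\|^2\leq\frac{\sigma^2\alpha^2}{n} \]
by Assumption~\ref{ass:PreLim:Splines:etabound}; this is the rigorous form of the elementary estimate $\mathbb{E}[(\frac1n\sum_i\epsilon_i)^2]=\sigma^2/n$ flagged before the lemma. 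If I can show that the operator norm of $G_{n,\lambda_n}^{-1}$ on $\text{Ran}(U_n)$ is eventually at most $\lambda_n^{-1}$, then
\[ \mathbb{E}\left[\|G_{n,\lambda_n}^{-1}\nu_n\|^2\mid\mathcal{G}_n\right]\leq\lambda_n^{-2}\,\mathbb{E}\left[\|\nu_n\|^2\mid\mathcal{G}_n\right]\leq\sigma^2\alpha^2\,n^{2p-1}, \]
which is $O(1)$ exactly because Assumption~\ref{ass:Cons:lambdascale} imposes $p\leq\frac12$; this is the step that consumes the admissible scaling regime.

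The operator-norm bound is where I expect the real work to lie, because the eigenvalues $\beta_j^{(n)}+\lambda_n\gamma_j^{(n)}$ behave differently on $\mathcal{H}_0$ and $\mathcal{H}_1$. When $\gamma_j^{(n)}=1$ one has $\psi_j^{(n)}\in\mathcal{H}_1$ and $\beta_j^{(n)}+\lambda_n\geq\lambda_n$ trivially. When $\gamma_j^{(n)}=0$, however, $\psi_j^{(n)}\in\mathcal{H}_0$ and the eigenvalue is $\beta_j^{(n)}=(U_n\psi_j^{(n)},\psi_j^{(n)})=\frac1n\sum_i(L_i\psi_j^{(n)})^2$, with no $\lambda_n$ to help, so I must bound these away from zero. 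I would fix an orthonormal basis $\{e_a\}_{a=1}^m$ of $\mathcal{H}_0$ and consider the $m\times m$ matrices $M_n$ with entries $\frac1n\sum_i(L_ie_a)(L_ie_b)$. By the strong law these converge almost surely to $M$ with entries $\int_{\mathcal{I}}(L_te_a)(L_te_b)\,\phi_T(\mathrm{d}t)$, and $M$ is positive definite: exactly as in Lemma~\ref{lem:Cons:Bound:H0Bound}, Assumption~\ref{ass:Prelim:Spline:Lunique} forces $\int_{\mathcal{I}}(L_t\psi)^2\,\phi_T(\mathrm{d}t)>0$ for every nonzero $\psi\in\mathcal{H}_0$ (note that this needs only uniqueness of $L_t$ on $\mathcal{H}_0$, not Assumption~\ref{ass:Cons:Norm}, consistent with the hypotheses of the lemma). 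Continuity of the smallest eigenvalue then gives $\lambda_{\min}(M_n)\to\lambda_{\min}(M)=:b_0>0$ almost surely, so $\beta_j^{(n)}\geq\lambda_{\min}(M_n)\geq b_0/2$ and hence $\beta_j^{(n)}\geq\lambda_n$ for all large $n$. Combining the two cases yields the bound $\lambda_n^{-1}$ on the operator norm, on a set of full measure and for all sufficiently large $n$, which closes the argument.

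A slightly more self-contained variant avoids the operator norm entirely: using $\mathbb{E}[(\nu_n,\psi_j^{(n)})^2\mid\mathcal{G}_n]=\sigma^2\beta_j^{(n)}/n$ (which follows from $\sum_i(\eta_i,\psi_j^{(n)})^2=n\beta_j^{(n)}$) gives $\mathbb{E}[\|G_{n,\lambda_n}^{-1}\nu_n\|^2\mid\mathcal{G}_n]=\frac{\sigma^2}{n}\sum_j\beta_j^{(n)}/(\beta_j^{(n)}+\lambda_n\gamma_j^{(n)})^2$; the $\gamma_j^{(n)}=1$ contribution is at most $\lambda_n^{-2}\sum_j\beta_j^{(n)}=\lambda_n^{-2}\frac1n\sum_i\|\eta_i\|^2\leq\alpha^2\lambda_n^{-2}$, while the $\gamma_j^{(n)}=0$ terms number at most $m$ and are each controlled by the same lower bound $b_0/2$. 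This reproduces the same $O(1)$ conclusion and still hinges on the $\mathcal{H}_0$ eigenvalue estimate, so I would present the operator-norm version as the main line.
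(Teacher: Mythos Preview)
Your argument is correct but follows a genuinely different route from the paper's. The paper does not use the spectral decomposition of Lemma~\ref{lem:Cons:Bound:NoNoiseBound} at all here: it works directly from the coercivity estimate $(G_{n,\lambda_n}\mu,\mu)=B(\mu,\mu)\geq\lambda_n\|\mu\|_1^2$ (the bilinear form from Lemma~\ref{lem:Prelim:Splines:Minfn}) to obtain $\|G_{n,\lambda_n}^{-1}\eta_i\|_1\leq\alpha/\lambda_n$, whence $\mathbb{E}[\|G_{n,\lambda_n}^{-1}\nu_n\|_1^2\mid\mathcal{G}_n]\leq\alpha^2\sigma^2/(n\lambda_n^2)$; the remaining $\mathcal{H}_0$ part is then dispatched by an appeal to Lemma~\ref{lem:Cons:Bound:H0Bound}. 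Your approach instead keeps everything in the eigenbasis $\{\psi_j^{(n)}\}$ and handles the $\gamma_j^{(n)}=0$ modes by a fresh strong-law argument showing $\lambda_{\min}(M_n)\to\lambda_{\min}(M)>0$.

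What each buys: the paper's proof is shorter because it recycles Lemma~\ref{lem:Cons:Bound:H0Bound}, but that lemma is stated under Assumptions~\ref{ass:Prelim:Spline:Space}--\ref{ass:Cons:lambdascale} \emph{and} \ref{ass:Cons:Cts}, and it delivers an almost-sure bound on $\|\mu^n\|_0$ rather than a conditional second-moment bound on $\|G_{n,\lambda_n}^{-1}\nu_n\|_0$, so the combination is somewhat informal. Your argument is self-contained, genuinely stays within Assumptions~\ref{ass:Prelim:Spline:Space}--\ref{ass:Cons:lambdascale} (the entries of $M_n$ are bounded by $\alpha^2$, so the strong law needs no continuity), and yields the conditional expectation bound directly. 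The price is that you lean on the simultaneous diagonalisation of $U_n$ and $\chi_1$ on $\text{Ran}(U_n)$ asserted in the proof of Lemma~\ref{lem:Cons:Bound:NoNoiseBound}; since you are explicitly building on that lemma this is fair, but it is worth noting that this is the one nontrivial structural input you import.
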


\begin{proof}
Recalling $B$ from the proof of Lemma~\ref{lem:Prelim:Splines:Minfn}, we have
\[ (G_{n,\lambda_n}\mu,\mu) = B(\mu,\mu) \geq \lambda_n \|\mu\|_1^2. \]
This implies $\|G_{n,\lambda_n} \mu\|\geq \lambda_n \|\mu\|_1$.
By Lemma~\ref{lem:Prelim:Splines:Minfn} there exists a well defined inverse of
$G_{n,\lambda_n}$ at $\eta_i$, hence we let $\mu = G_{n,\lambda_n}^{-1} \eta_i$ and we have 
\[ \| G_{n,\lambda_n}^{-1} \eta_i \|_1 \leq \frac{1}{\lambda_n} \|\eta_i\| \leq \frac{\alpha}{\lambda_n}. \]
almost surely. Now, define $\nu_n = \frac{1}{n}\sum_{i=1}^n \epsilon_i \eta_i$ and 
\begin{align*}
\mathbb{E}\left[\left. \left\| G_{n,\lambda_n}^{-1} \nu_n \right\|_1^2 \right|\mathcal{G}_n\right] & \as \frac{\sigma^2}{n^2} \sum_{i=1}^n \left\| G_{n,\lambda_n}^{-1} \eta_i \right\|_1^2 \\
 & \leq \frac{\alpha^2\sigma^2}{n\lambda_n^2}.
\end{align*}
Combined with Lemma~\ref{lem:Cons:Bound:H0Bound} (the $\mathcal{H}_0$ bound)
this proves the lemma.
\end{proof}

%

Recalling~\eqref{eq:Cons:Bound:decompmu} and via
Lemmas~\ref{lem:Cons:Bound:NoNoiseBound}
and~\ref{lem:Cons:Bound:NoiseDifBound} we 
obtain the following asymptotic bound on minimizers in $\mathcal{H}$.

\begin{theorem}
\label{thm:Cons:Bound:muBound}
Under Assumptions~\ref{ass:Prelim:Spline:Space}-\ref{ass:Cons:lambdascale} we have
\begin{equation} \label{eq:Cons:Bound:munbound}
\mathbb{E} \left[ \|\mu^n \|^2 |\mathcal{G}_n \right] = O(1) \quad \text{almost surely.}
\end{equation}
\end{theorem}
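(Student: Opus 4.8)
The plan is to bound $\mu^n$ by treating separately the two terms in the decomposition~\eqref{eq:Cons:Bound:decompmu}, namely the noise-free term $G_{n,\lambda_n}^{-1} U_n \mu^\dagger$ and the noise term $\frac{1}{n}\sum_{i=1}^n \epsilon_i G_{n,\lambda_n}^{-1}\eta_i$, since each has already been estimated in the two preceding lemmas. Using $\|a+b\|^2 \leq 2\|a\|^2 + 2\|b\|^2$ together with the linearity of conditional expectation, I would reduce the claim to showing that the conditional expectations given $\mathcal{G}_n$ of $\|G_{n,\lambda_n}^{-1} U_n \mu^\dagger\|^2$ and of $\|\frac{1}{n}\sum_{i=1}^n \epsilon_i G_{n,\lambda_n}^{-1}\eta_i\|^2$ are each $O(1)$ almost surely.

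For the first term I would note that it is $\mathcal{G}_n$-measurable --- both $G_{n,\lambda_n}$ and $U_n$ are built from $(\eta_1,\dots,\eta_n)$ and $\mu^\dagger$ is deterministic --- so the conditional expectation acts trivially on it, and Lemma~\ref{lem:Cons:Bound:NoNoiseBound} bounds it by $\|G_{n,\lambda_n}^{-1} U_n\|_{\mathcal{L}(\mathcal{H},\mathcal{H})}^2\|\mu^\dagger\|^2 \leq \|\mu^\dagger\|^2$, a fixed constant. For the second term I would invoke Lemma~\ref{lem:Cons:Bound:NoiseDifBound} directly; this is the step where the independence and centering of the $\epsilon_i$ (so that the cross terms vanish under $\mathbb{E}[\cdot|\mathcal{G}_n]$) and the scaling constraint $p\leq\frac{1}{2}$ (so that $n^{-1}\lambda_n^{-2}=n^{2p-1}$ stays bounded) enter. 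Adding the two bounds and taking $\mathbb{E}[\cdot|\mathcal{G}_n]$ throughout yields~\eqref{eq:Cons:Bound:munbound}.

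I do not anticipate a genuine obstacle here, since the two substantive analytic estimates are already packaged in Lemmas~\ref{lem:Cons:Bound:NoNoiseBound} and~\ref{lem:Cons:Bound:NoiseDifBound}; the remaining work is essentially a one-line combination. The only point meriting care is the measurability bookkeeping: confirming that $G_{n,\lambda_n}^{-1} U_n \mu^\dagger$ is $\mathcal{G}_n$-measurable (so that it passes through the conditioning unchanged) and that the $\epsilon_i$ are independent of $\mathcal{G}_n$, which is exactly what licenses the application of Lemma~\ref{lem:Cons:Bound:NoiseDifBound}. With these observations in place the result follows immediately, and the hypotheses of Assumptions~\ref{ass:Prelim:Spline:Space}--\ref{ass:Cons:lambdascale} are precisely those under which both lemmas hold.
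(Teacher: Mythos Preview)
Your proposal is correct and matches the paper's approach exactly: the paper simply states that the result follows by recalling the decomposition~\eqref{eq:Cons:Bound:decompmu} and invoking Lemmas~\ref{lem:Cons:Bound:NoNoiseBound} and~\ref{lem:Cons:Bound:NoiseDifBound}. Your write-up adds the measurability and $\|a+b\|^2\leq 2\|a\|^2+2\|b\|^2$ details that the paper leaves implicit, but the argument is the same one-line combination.
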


This is a stronger result than we needed; we were only required to show that $\|\mu^n\|$ is bounded in probability.
Taking expectation of~\eqref{eq:Cons:Bound:munbound} one has
\[ \mathbb{E} \|\mu^n\|^2 = O(1). \]
Hence applying Chebyshev's inequality we may conclude that $\|\mu^n\|=O_p(1)$.

\begin{corollary}
\label{cor:Cons:Bound:Boundinprob}
Under Assumptions~\ref{ass:Prelim:Spline:Space}-\ref{ass:Cons:lambdascale} we have $\|\mu^n\|=O_p(1)$.
\end{corollary}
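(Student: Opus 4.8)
The plan is to read off $\|\mu^n\|=O_p(1)$ from the second-moment control of Theorem~\ref{thm:Cons:Bound:muBound}. By Definition~\ref{def:Prelim:Not:Rate}(ii) with $r_n\equiv 1$, I must produce, for each $\epsilon>0$, a level $M_\epsilon<\infty$ and an index $N_\epsilon$ such that $\mathbb{P}(\|\mu^n\|\geq M_\epsilon)\leq\epsilon$ for all $n\geq N_\epsilon$. Writing $Z_n:=\mathbb{E}[\|\mu^n\|^2\mid\mathcal{G}_n]$, Theorem~\ref{thm:Cons:Bound:muBound} states $Z_n=O(1)$ almost surely; more precisely, inspecting its proof (the deterministic bounds of Lemmas~\ref{lem:Cons:Bound:NoNoiseBound} and~\ref{lem:Cons:Bound:NoiseDifBound} together with the $\mathcal{H}_0$-bound of Lemma~\ref{lem:Cons:Bound:H0Bound}) furnishes a \emph{deterministic} constant $C<\infty$ with $\limsup_{n\to\infty}Z_n\leq C$ almost surely. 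The shortcut suggested in the discussion preceding the corollary is to take expectations, $\mathbb{E}\|\mu^n\|^2=\mathbb{E}[Z_n]$ by the tower property, and then to apply Markov's inequality $\mathbb{P}(\|\mu^n\|\geq M)\leq\mathbb{E}\|\mu^n\|^2/M^2$.

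The step I expect to be delicate is the passage from the \emph{conditional}, almost-sure bound on $Z_n$ to an \emph{unconditional} bound on $\mathbb{E}[Z_n]$: Theorem~\ref{thm:Cons:Bound:muBound} only controls $Z_n$ for indices $n$ exceeding a realization-dependent threshold $N(\omega)$ (the point at which $G_{n,\lambda_n}^{-1}$ exists and the empirical averages in Lemma~\ref{lem:Cons:Bound:H0Bound} have settled), so a naive expectation could a priori pick up unbounded mass from the shrinking event $\{N(\omega)>n\}$. To avoid this I would argue directly at the level of probabilities. Fix $\epsilon>0$ and a constant $C'>C$, and split on the $\mathcal{G}_n$-measurable event $\{Z_n\leq C'\}$. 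On that event the conditional Markov inequality gives $\mathbb{P}(\|\mu^n\|\geq M\mid\mathcal{G}_n)\leq Z_n/M^2\leq C'/M^2$, whence $\mathbb{P}(\|\mu^n\|\geq M,\,Z_n\leq C')=\mathbb{E}[\mathbf{1}_{\{Z_n\leq C'\}}\,\mathbb{P}(\|\mu^n\|\geq M\mid\mathcal{G}_n)]\leq C'/M^2$. For the complementary event, $\limsup_n Z_n\leq C<C'$ almost surely forces $\mathbf{1}_{\{Z_n>C'\}}\to 0$ almost surely, so bounded convergence yields $\mathbb{P}(Z_n>C')\to 0$.

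Putting these together completes the argument: given $\epsilon$, I would set $M_\epsilon=\sqrt{2C'/\epsilon}$ so that $C'/M_\epsilon^2=\epsilon/2$, and choose $N_\epsilon$ large enough that $\mathbb{P}(Z_n>C')\leq\epsilon/2$ for all $n\geq N_\epsilon$; then $\mathbb{P}(\|\mu^n\|\geq M_\epsilon)\leq C'/M_\epsilon^2+\mathbb{P}(Z_n>C')\leq\epsilon$ for $n\geq N_\epsilon$, which is precisely $\|\mu^n\|=O_p(1)$. This route has the further merit of making explicit that only large-$n$ behaviour is needed, sidestepping any question of whether $\mu^n$ is well defined in the small-$n$, few-distinct-design-points regime.
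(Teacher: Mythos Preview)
Your argument is correct and follows the same Markov--inequality route the paper sketches in the two sentences preceding the corollary (``Taking expectation \ldots\ Hence applying Chebyshev's inequality \ldots''). The only substantive difference is that you do not simply assert $\mathbb{E}\|\mu^n\|^2=O(1)$ from the almost-sure conditional statement of Theorem~\ref{thm:Cons:Bound:muBound}; instead you split on the $\mathcal{G}_n$-measurable event $\{Z_n\leq C'\}$ and use bounded convergence on its complement. This is a genuine improvement in rigour: the paper's passage from ``$\mathbb{E}[\|\mu^n\|^2\mid\mathcal{G}_n]=O(1)$ almost surely'' to ``$\mathbb{E}\|\mu^n\|^2=O(1)$'' tacitly assumes one can integrate through the realization-dependent threshold $N(\omega)$ coming from Lemma~\ref{lem:Cons:Bound:H0Bound}, which is exactly the point you flag and handle. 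Your version sidesteps any need to control $\|\mu^n\|$ on the vanishing event $\{N(\omega)>n\}$, whereas the paper's shortcut implicitly requires either such control or an additional uniform-integrability argument it does not supply.
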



We conclude this section with a brief analysis of the rate of convergence.
For any $F\in\mathcal{H}^*$, by the Riesz Representation Theorem, there exists $\xi\in\mathcal{H}$ such that $F(\mu)=(\mu,\xi)$ for all $\mu\in\mathcal{H}$.
Hence
\[ F(\mu^n) - F(\mu^\dagger) = ((G_{n,\lambda_n}^{-1} U_n - \text{Id}) \mu^\dagger + G_{n,\lambda_n}^{-1} \nu^n, \xi) \]
where $\nu^n= \frac{1}{n} \sum_{i=1}^n \epsilon_i \eta_i$.
Decomposing $\mathcal{H}$ into $\mathcal{H}=\overline{\text{Ran}(U_n)}\oplus \text{Ran}(U_n)^\bot $ one can write
\begin{align}
F(\mu^n) - F(\mu^\dagger) & = \left(\left(G_{n,\lambda_n}^{-1} U_n - \chi_{\overline{\text{Ran}(U_n)}}\right) \mu^\dagger,\xi\right) - \left(\chi_{\text{Ran}(U_n)^\bot}\mu^\dagger,\xi\right) + \left( G_{n,\lambda_n}^{-1} \nu^n, \xi\right) \notag \\
 & = \sum_{j=1}^n \frac{-\lambda_n}{\beta_j^{(n)}+ \lambda_n}\left(\mu^\dagger,\psi_j^{(n)}\right)\left(\psi_j^{(n)},\xi\right) - \left(\chi_{\text{Ran}(U_n)^\bot}\mu^\dagger,\xi\right) + \left( G_{n,\lambda_n}^{-1} \nu^n, \xi\right) \label{eq:Cons:Bound:RateExpansion}
\end{align}
where $\chi_{\overline{\text{Ran}(U_n)}}$ is the projection onto $\overline{\text{Ran}(U_n)}$.
If we assume 
\begin{equation} \label{eq:Cons:Bound:HScale}
\lim_{n\to \infty} \sum_{j=1}^n \frac{1}{\beta_j^{(n)}} <\infty. 
\end{equation}
Then
\[ \sum_{j=1}^n \frac{-\lambda_n}{\beta_j^{(n)}+ \lambda_n}\left(\mu^\dagger,\psi_j^{(n)}\right)\left(\psi_j^{(n)},\xi\right) \leq \|\mu^\dagger\| \|\xi\| \lambda_n \sum_{j=1}^n \frac{1}{\beta_j^{(n)}}.  \]
And therefore the first term in~\eqref{eq:Cons:Bound:RateExpansion} is of the order $n^{-p}$.
By the proof of Lemma~\ref{lem:Cons:Bound:NoiseDifBound} the third term in~\eqref{eq:Cons:Bound:RateExpansion} is of order $\frac{1}{\sqrt{n}\lambda_n}$.
The second term is independent of $\lambda_n$.
The optimal rate of convergence is therefore found by balancing the first and third terms.
This will imply an optimal choice of $p=\frac{1}{4}$.
We summarise in the following proposition.


\begin{proposition}
\label{prop:Cons:Bound:OptimalRate}
Under Assumptions~\ref{ass:Prelim:Spline:Space}-\ref{ass:Cons:Norm}, for $F\in\mathcal{H}^*$ take $\xi\in\mathcal{H}$ such that $F(\mu)=(\mu,\xi)$ and assume~\eqref{eq:Cons:Bound:HScale} holds and that there exists $q>0$ such that 
\[ \left| \|\mu^\dagger\|-\|\chi_{\mathrm{Ran}(U_n)} \mu^\dagger\| \right| \lesssim n^{-q} \]
where $U_n$ is defined by~\eqref{eq:Cons:Bound:Un} and $(\beta_j^{(n)},\psi_j^{(n)})$ are an eigenvalue-eigenfunction pair for $U_n$.
Then 
\begin{equation} \label{eq:Cons:Bound:Rate}
\mathbb{E}\left[ | F(\mu^n) - F(\mu^\dagger)| \; |\mathcal{G}_n \right] = O\left(n^{-p}\right) + O\left(n^{-q}\right) + O\left(\frac{1}{\lambda_n\sqrt{n}}\right) \quad \quad \text{almost surely.}
\end{equation}
In particular the optimal choice is $p = \frac{1}{4}$ in which case the rate of convergence is
\[ \mathbb{E}\left[ | F(\mu^n) - F(\mu^\dagger)| \; |\mathcal{G}_n \right] = O\left( n^{ \max\{-\frac{1}{4},-q\}} \right). \]
\end{proposition}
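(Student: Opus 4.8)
The plan is to bound each of the three terms in the expansion~\eqref{eq:Cons:Bound:RateExpansion} of $F(\mu^n)-F(\mu^\dagger)$ separately, take the conditional expectation $\mathbb{E}[\,\cdot\mid\mathcal{G}_n]$, and then optimise over $p$. The key structural observation is that the first two terms are $\mathcal{G}_n$-measurable: they depend on the data only through $U_n$ (hence through the eigenpairs $\beta_j^{(n)},\psi_j^{(n)}$) together with the deterministic quantities $\mu^\dagger$, $\xi$ and $\lambda_n$, and not on the noise $\epsilon_i$. Conditioning therefore leaves them unchanged, and only the noise term genuinely requires a conditional second-moment argument.

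For the first term I would simply invoke the inequality established immediately before the statement: under~\eqref{eq:Cons:Bound:HScale},
\[ \left| \sum_{j=1}^n \frac{-\lambda_n}{\beta_j^{(n)}+ \lambda_n}\left(\mu^\dagger,\psi_j^{(n)}\right)\left(\psi_j^{(n)},\xi\right) \right| \leq \|\mu^\dagger\| \, \|\xi\| \, \lambda_n \sum_{j=1}^n \frac{1}{\beta_j^{(n)}} = O(\lambda_n) = O(n^{-p}), \]
since $\sum_{j}1/\beta_j^{(n)}$ is bounded in $n$ by hypothesis. For the second term I would apply Cauchy--Schwarz, $|(\chi_{\text{Ran}(U_n)^\bot}\mu^\dagger,\xi)|\leq \|\chi_{\text{Ran}(U_n)^\bot}\mu^\dagger\|\,\|\xi\|$, and then control $\|\chi_{\text{Ran}(U_n)^\bot}\mu^\dagger\|$ by exploiting that $\text{Ran}(U_n)$ is finite dimensional, hence closed, so the decomposition $\mathcal{H}=\text{Ran}(U_n)\oplus\text{Ran}(U_n)^\bot$ is orthogonal; the projection-gap hypothesis $\big|\|\mu^\dagger\|-\|\chi_{\text{Ran}(U_n)}\mu^\dagger\|\big|\lesssim n^{-q}$ then yields the bound $O(n^{-q})$ for this term.

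The third term is the only stochastic one. Here I would write $|(G_{n,\lambda_n}^{-1}\nu^n,\xi)|\leq \|G_{n,\lambda_n}^{-1}\nu^n\|\,\|\xi\|$ and pass to the conditional expectation via the conditional Jensen inequality, $\mathbb{E}[\|G_{n,\lambda_n}^{-1}\nu^n\|\mid\mathcal{G}_n]\leq \big(\mathbb{E}[\|G_{n,\lambda_n}^{-1}\nu^n\|^2\mid\mathcal{G}_n]\big)^{1/2}$. The required second moment is exactly what the proof of Lemma~\ref{lem:Cons:Bound:NoiseDifBound} supplies: using $\|G_{n,\lambda_n}^{-1}\eta_i\|_1\leq \alpha/\lambda_n$ together with the independence and centredness of the $\epsilon_i$ one obtains a conditional second moment of order $1/(n\lambda_n^2)$ for the dominant $\mathcal{H}_1$-contribution, so that $\mathbb{E}[\|G_{n,\lambda_n}^{-1}\nu^n\|^2\mid\mathcal{G}_n]^{1/2}=O\big(1/(\sqrt{n}\lambda_n)\big)$ almost surely. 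Summing the three contributions through the triangle inequality produces~\eqref{eq:Cons:Bound:Rate}.

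Finally, the optimal rate is an elementary balancing: substituting $\lambda_n=n^{-p}$ turns the third term into $O(n^{p-1/2})$, and equating it with the first term by setting $n^{-p}=n^{p-1/2}$ forces $p=\tfrac14$, at which value both are $O(n^{-1/4})$; the $q$-dependent term is untouched, so the overall rate becomes $O\big(n^{\max\{-1/4,-q\}}\big)$. I expect the main obstacle to be bookkeeping rather than any single estimate: one must correctly recognise that terms one and two survive conditioning unchanged while only the noise term calls on the second-moment machinery of Lemma~\ref{lem:Cons:Bound:NoiseDifBound}, and one must keep the two distinct approximation errors (the regularisation bias $n^{-p}$ and the range-projection gap $n^{-q}$) separate, since only the former enters the balance that fixes $p=\tfrac14$.
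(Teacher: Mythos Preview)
Your proposal is essentially the paper's own proof: it bounds the first term via the inequality displayed just before the proposition (using~\eqref{eq:Cons:Bound:HScale}), the second via Cauchy--Schwarz together with the projection-gap hypothesis, and the third via Lemma~\ref{lem:Cons:Bound:NoiseDifBound}, then balances $n^{-p}$ against $n^{p-1/2}$ to obtain $p=\tfrac14$. Your added remarks that the first two terms are $\mathcal{G}_n$-measurable and that conditional Jensen converts the second-moment bound of Lemma~\ref{lem:Cons:Bound:NoiseDifBound} into a first-moment bound make the argument slightly more explicit than the paper's terse version, but the route is the same.
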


\begin{proof}
The argument preceding the theorem provides the proof for the first term in~\eqref{eq:Cons:Bound:Rate} and the third term is a consequence of Lemma~\ref{lem:Cons:Bound:NoiseDifBound}.
The second term follows easily from
\[ \left| \left(\chi_{\text{Ran}(U_n)^\bot}\mu^\dagger,\xi\right) \right| \leq \|\xi\| \|\chi_{\text{Ran}(U_n)^\bot}\mu^\dagger \| \leq \|\xi\| \left( \|\mu^\dagger\| - \|\chi_{\text{Ran}(U_n)}\mu^\dagger \| \right). \]

%

The optimal rate is a consequence of choosing $p$ that minimizes $n^{-p} + n^{p-0.5}$. 
\end{proof}

The conditions of the above theorem are difficult to theoretically verify.
Even for the special spline problem the authors know of no method to check whether assumption~\eqref{eq:Cons:Bound:HScale} holds and whether such a $q$ exist.
We leave further investigation into the rate of convergence for future works.

%
\subsection{Sharpness of the Scaling Regime - Proof of Theorem~\ref{thm:Cons:Sharp} \label{subsec:Cons:Sharp}}
\begin{proof}[Proof of Theorem~\ref{thm:Cons:Sharp}]
Fix any $\alpha>0$ and without loss of generality we can choose $\{\eta_t\}_{t\in\mathcal{I}}$ such that $\|\eta_t\|=\alpha$ for all $t\in\mathcal{I}$.
Define $L_t\in\mathcal{H}$ by $L_t = (\eta_i,\cdot)$.

In the proof of Lemma~\ref{lem:Prelim:Splines:Minfn} we showed
\[ \left| (G_{n,\lambda_n}\mu,\nu) \right| \leq (\alpha^2 + \lambda_n) \| \mu\| \|\nu\|. \]
Letting $\nu=G_{n,\lambda_n}\mu$, for $\mu\in\text{span}\{\eta_1,\dots,\eta_n\}$, one has
\[ \|G_{n,\lambda_n} \mu\|^2 \leq (\alpha^2 + \lambda_n) \|\mu\| \|G_{n,\lambda_n} \mu\|. \]
And hence
\[ \|G_{n,\lambda_n} \mu\| \leq (\alpha^2 + \lambda_n) \|\mu\|. \]
Which implies
\[ \| G^{-1}_{n,\lambda_n} \mu\| \geq \frac{1}{\alpha^2 + \lambda_n} \|\mu\|. \]
Now, for $\nu^n=\frac{1}{n}\sum_{i=1}^n \epsilon_i \eta_i$, we consider
\begin{align*}
\mathbb{E}\left[\left. \|G_{n,\lambda_n}^{-1} \nu^n\|^2 \right|\mathcal{G}_n\right] & \geq \frac{1}{(\alpha^2+\lambda_n)^2} \mathbb{E} \left[ \|\nu^n\|^2 |\mathcal{G}_n \right] \\
 & \as \frac{\sigma^2\alpha^2}{\lambda_n^2 n (\alpha^2+\lambda_n)^2} \\
 & \to \infty
\end{align*}
as $\lambda_n^2 n\to 0$.
Hence by taking expectations:
\[ \mathbb{E}\left[\|G_{n,\lambda_n}^{-1} \nu^n\|^2 \right] \to \infty. \]
By noting
\[ \mathbb{E}\left[ \|\mu^n\|^2 \right] = \mathbb{E}\left[ \|G_{n,\lambda_n}^{-1} U_n \mu^\dagger \|^2\right] + \mathbb{E}\left[ \|G_{n,\lambda_n}^{-1} \nu^n\|^2\right] \]
we conclude the proof.
\end{proof}
\section{Application to the Special Spline Model \label{sec:Ex}}
Consider the application to the special spline case, $L_i\mu=\mu(t_i)$.
We let
\[ \mathcal{H} = H^m := \left\{ g:[0,1] \to \mathbb{R} \text{ s.t } \nabla^i g \text{ abs. cts. for } i=1,2,\dots,m-1 \text{ and } \nabla^m g\in L^2 \right\}. \]
For $m\geq 1$, $\mathcal{H}$ is a reproducing kernel Hilbert space and therefore $L_i$ as defined are linear and bounded operators on $\mathcal{H}$.
See~\cite{bogachev98,wahba90} for more details on reproducing kernel Hilbert spaces.
The special spline solution is the minimizer of
\[ f_n(\mu) = \frac{1}{n} \sum_{i=1}^n (y_i - \mu(t_i))^2 + \lambda_n \| \nabla^m \mu \|_{L^2}^2 \]
over all $\mu \in H^m$.
It can be shown that the minimizer $\mu^{(n)}$ of $f_n$ is a piecewise polynomial of degree $2m-1$ in each interval $(t_i,t_{i+1})$ for $i=0,\dots,n$ (where we define $t_0=0$ and $t_{n+1}=1$), for example see~\cite[Section 1.3]{wahba90}.

This section discusses the following points.
\begin{enumerate}
\item The decomposition $\mathcal{H}=\mathcal{H}_0\oplus\mathcal{H}_1$ where $\mathcal{H}_0$ is finite dimensional.
\item The function $\eta_t$ corresponding to $(\eta_t,\mu)=L_t\mu=\mu(t)$.
\end{enumerate}
The other assumptions needed to apply Theorem~\ref{thm:Cons:Cons} are Assumption~\ref{ass:Prelim:Spline:Lunique} and Assumption~\ref{ass:Cons:Norm}.
Assumption~\ref{ass:Prelim:Spline:Lunique} is
\[ \mu(t)=\mu(r) \quad \text{for all polynomials } \mu \text{ of degree at most } m-1 \text{ then } t=r \]
which clearly holds.
Assumption~\ref{ass:Cons:Norm} becomes
\[ \int_0^1 |\nu(t)|^2 \phi_T(\mathrm{d} t) = 0 \Leftrightarrow \nu=0 \]
which, for example, is true if $\phi_T(\mathrm{d} t) = \hat{\phi}_T(t) \; \mathrm{d} t$ and $\hat{\phi}_T(t)>0$ for all $t\in [0,1]$.


\paragraph*{1. The decomposition $\mathcal{H}=\mathcal{H}_0\oplus\mathcal{H}_1$.}
For $\mu\in\mathcal{H}$ by Taylor expanding $\mu$ from 0 we can write:
\begin{align*}
\mu(t) & = \sum_{i=0}^{m-1} \frac{\nabla^i\mu(0)}{i!} t^i + R(t)
\end{align*}
where $\nabla^i R(0)=0$ for all $i=0,1,\dots,m-1$.
Hence $R\in \mathcal{H}_1$ where
\[ \mathcal{H}_1 = \left\{ g\in H^m: \nabla^i g(0) = 0 \text{ for all } i=0,1,\dots,m-1 \right\}. \]
A Poincar\'e inequality holds on this space so $\|\mu\|_1^2=\int_0^1 |\nabla^m \mu(t)|^2 \; \mathrm{d} t$ is a norm on $\mathcal{H}_1$.

We define $\mathcal{H}_0$ to be the span of the functions $\zeta_i$ defined by
\[ \zeta_i(t) = \frac{t^i}{i!} \quad \quad \quad \text{for } i=0,1,\dots,m-1. \]
The space is equipped with the inner product
\[ ( \mu,\nu )_0 = \sum_{i=0}^{m-1} \nabla^i \mu(0) \nabla^i \nu(0). \]
The space $\mathcal{H}_0$ has $\text{dim}(\mathcal{H}_0)=m$.

\paragraph*{2. The functions $\eta_t$.}
In the above $R$ is given by
\begin{align*}
R(t) & = \int_0^1 \frac{(t-u)_+^{m-1}}{(m-1)!} \nabla^m\mu(u) \; \mathrm{d} u = \int_0^1 G(t,u) \nabla^m \mu (u) \; \mathrm{d} u
\end{align*}
where $(u)_+=\max\{0,u\}$ and
\[ G(t,u) = \frac{(t-u)_+^{m-1}}{(m-1)!} \]
is the Green's function for $\nabla^m \mu =\nu$ and boundary conditions $\nabla^j \mu(0)=0$ for all $0\leq j\leq m-1$.

We claim that $\eta_t\in H^m$ satisfying $(\eta_t,\mu)=\mu(t)$ are given by
\[ \eta_t(r) = \sum_{i=0}^{m-1} \zeta_i(t)\zeta_i(r) +  \int_0^1 G(t,u) G(r,u) \; \mathrm{d} u =: \eta_t^0(r) + \eta_t^1(r).  \]
Furthermore $\eta^0_t\in \mathcal{H}_0$ and $\eta_t^1\in \mathcal{H}_1$ for all $t\in [0,1]$.
The proof follows directly from calculating
\[ (\eta_t,\mu) = \sum_{i=0}^{m-1} \nabla^i\eta_t(0) \nabla^i\mu(0) + \int_0^1 \nabla^m \eta_t(u) \nabla^m\mu (u) \; \mathrm{d} u \]
and noticing
\begin{align*}
\nabla^i \eta_t(r) & = \sum_{j=1}^{m-1} \zeta_j(t) \left[\nabla^i\zeta_j(r)\right]_{r=0} = \zeta_i(t) \quad \text{for } i<m \\
\nabla^m \eta_t(r) & = \nabla^m_r \int_0^1 G(t,u) G(r,u) \; \mathrm{d}u = G(t,r).
\end{align*}
One can easily show that $\|\eta_t\|\leq 1$ for all $t\in [0,1]$.

Continuity of $\eta_t$ follows easily.
As each polynomial is Lipschitz continuous on the interval $[0,1]$, there exists a constant $C_i$ (depending on the order of the polynomial $i$) such that $|\zeta_i(t) - \zeta_i(s)|\leq C_i|t-s|$.
Now for the integral term let $m\geq 2$ and $s\geq t$ then:
\begin{align*}
\left|\int_0^1 \left( G(s,u) - G(t,u) \right) G(r,u) \; \mathrm{d} u \right| & = \left| \int_0^1 \left( \mathbb{I}_{s>u} \frac{(s-u)^{m-1}}{(m-1)!} - \mathbb{I}_{t>u} \frac{(t-u)^{m-1}}{(m-1)!} \right) G(r,u) \; \mathrm{d}u \right| \\
 & \leq \int_t^s \frac{(s-u)^{m-1}}{(m-1)!}G(r,u) \; \mathrm{d} u \\
 & \quad \quad \quad + \frac{1}{(m-2)!} \int_0^t \left| s-t\right| g(r,u) \; \mathrm{d} u \\
 & \leq \frac{m|s-t|}{[(m-1)!]^2}.  
\end{align*}
The case $m=1$ is similar.
It follows that $\|L_s-L_t\|_{\mathcal{H}^*} = \|\eta_s - \eta_t\| \leq C|s-t|$ for some $C<\infty$ and hence $L_t$ is continuous.

\section*{Acknowledgments}

This work was carried out whilst MT was part of MASDOC at the University of Warwick and supported by an EPSRC Industrial CASE Award PhD Studentship with Selex ES Ltd.


\bibliographystyle{plain}
\bibliography{references}

\end{document}